\def\R{\mathbb{R}}
\def\Z{\mathbb{Z}}
\newcommand{\squeezelist}{\setlength{\itemsep}{0pt}}
\newcommand{\ext}{$\varepsilon$} 	
\newcommand{\ter}{$\tau$}			
\newcommand{\hide}[1]{}
\theoremstyle{plain}
\newtheorem{thm}{Theorem}
\newtheorem{prop}[thm]{Proposition}
\newtheorem{lem}[thm]{Lemma}
\newtheorem*{conj}{Conjecture}
\theoremstyle{definition}
\newtheorem*{defn}{Definition}
\newtheorem*{exmp}{Example}
\theoremstyle{remark}
\newtheorem*{rem}{Remark}
\newtheorem*{ack}{Acknowledgments}
\numberwithin{equation}{section}
\begin{document}

\title{Unfolding cubes: nets, packings, partitions, chords}

\author[K.\ DeSplinter]{Kristin DeSplinter}
\address{K.\ DeSplinter: University of Utah, Salt Lake City, UT 84112}
\email{desplinter.k@utah.edu}

\author[S.\ Devadoss]{Satyan L.\ Devadoss}
\address{S.\ Devadoss: University of San Diego, San Diego, CA 92110}
\email{devadoss@sandiego.edu}

\author[J.\ Readyhough]{Jordan Readyhough}
\address{J.\ Readyhough: Columbia University, New York, NY 10027}
\email{jhr2150@columbia.edu}

\author[B.\ Wimberly]{Bryce Wimberly}
\address{B.\ Wimberly: Trident Seafood Analysis, Seattle, WA 98107}
\email{bwimberly2@gmail.com}

\begin{abstract}
We show that every ridge unfolding of an $n$-cube is without self-overlap, yielding a valid net.  The results are obtained by developing machinery that translates cube unfolding into combinatorial frameworks.   Moreover, the geometry of the bounding boxes of these cube nets are classified using integer partitions, as well as the combinatorics of  path unfoldings seen through the lens of chord diagrams.
\end{abstract}

\subjclass[2010]{52B05 (primary), 05C38, 05A18 (secondary)}

\keywords{nets, cubes, unfolding, integer partition, chord diagram}

\maketitle
\baselineskip=16pt

\vspace{-.3in}
%
%
\section{Introduction} 

The study of unfolding polyhedra was popularized by Albrecht D\"urer  in the early 16th century in his influential book \emph{The Painter's Manual}. It contains the first recorded examples of polyhedral \emph{nets}, connected edge unfoldings of polyhedra that lay flat on the plane without overlap. Motivated by this, Shephard  \cite{sh1} conjectures that every convex polyhedron can be cut along certain edges and admits a net.  This claim remains tantalizingly open and has resulted in numerous areas of exploration.

We consider this question for higher-dimensional convex \emph{polytopes}. The codimension-one faces of a polytope are  \emph{facets} and its codimension-two faces are \emph{ridges}. The analog of an edge unfolding of polyhedron is the \emph{ridge unfolding} of an $n$-dimensional polytope: the process of cutting the polytope along a collection of its ridges so that the resulting (connected) arrangement of its facets develops isometrically into an $\R^{n-1}$ hyperplane.

There is a rich history of higher-dimensional unfoldings of polytopes, with the collected works of Alexandrov \cite{alex1} serving as seminal reading. In 1984, Turney \cite{tur} enumerates the 261 ridge unfoldings of the 4-cube, and in 1998, Buekenhout and Parker \cite{bupa} extend this enumeration to the other five regular convex 4-polytopes.  Both of these works focus on enumerative rather than geometric unfolding results. Miller and Pak \cite{mp} construct an  algorithm which provides an unfolding of polytopes without overlap.  However, their method allows cuts interior to facets, not just along ridges.

Our work targets ridge unfoldings of the $n$-cube.  For the $3$-cube, Figure~\ref{f:3dnets} shows the 11 different unfoldings (up to symmetry), all of which yield nets.    Section~\ref{s:nets} generalizes this into our main result:  \emph{every} ridge unfolding of the $n$-cube results in a net.     Section~\ref{s:pack} considers packing these cube nets into boxes and classifies them using integer partitions. Finally, motivated by architectural housing developments, Section~\ref{s:path} considers the combinatorics of  path unfoldings through the lens of chord diagrams.

\begin{figure}[h]
\includegraphics[width=.9\textwidth]{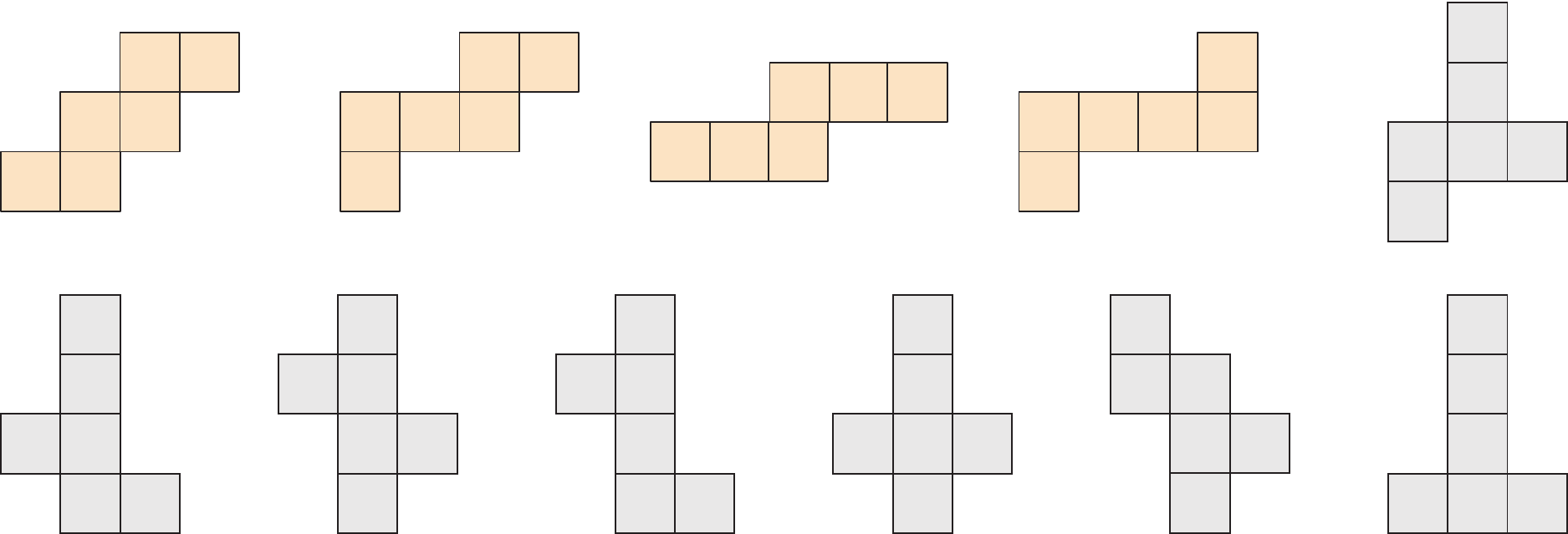}
\caption{Nets of the 3-cube, with path unfoldings highlighted.}
\label{f:3dnets}
\end{figure}

\begin{ack}
We thank Bob Connelly, Erik Demaine, Joe O'Rourke, and Shannon Starkey for guidance and encouraging conversations.  This work was partially supported from an endowment from the Fletcher Jones Foundation.
\end{ack}

%
%
\section{Rolling and Unfolding}  \label{s:nets}
\subsection{}

We explore ridge unfoldings of a convex polytope $P$ by focusing on the combinatorics of the arrangement of its facets in the unfolding. In particular, a ridge unfolding induces a tree whose nodes are the facets of the polytope and whose edges are the uncut ridges between the facets \cite{sh2}.  Indeed, this is a spanning tree in the 1-skeleton of the dual of $P$.
The dual of the $n$-cube is the $n$-orthoplex, whose 1-skeleton forms the $n$-\emph{Roberts graph}.\footnote{This graph has numerous names, including the Tur\'an graph $T(2n,n)$ and the $n$-cocktail party graph.}  The $2n$ nodes of this graph (corresponding to the $2n$ facets of the $n$-cube) can arranged on a circle so that antipodal nodes represent opposite facets of the cube.   Its edges connect every pair of nodes except for antipodal ones. Thus, unfoldings of an $n$-cube are in bijection with spanning trees of the $n$-Roberts graph.

\begin{figure}[h]
\includegraphics[width=.9\textwidth]{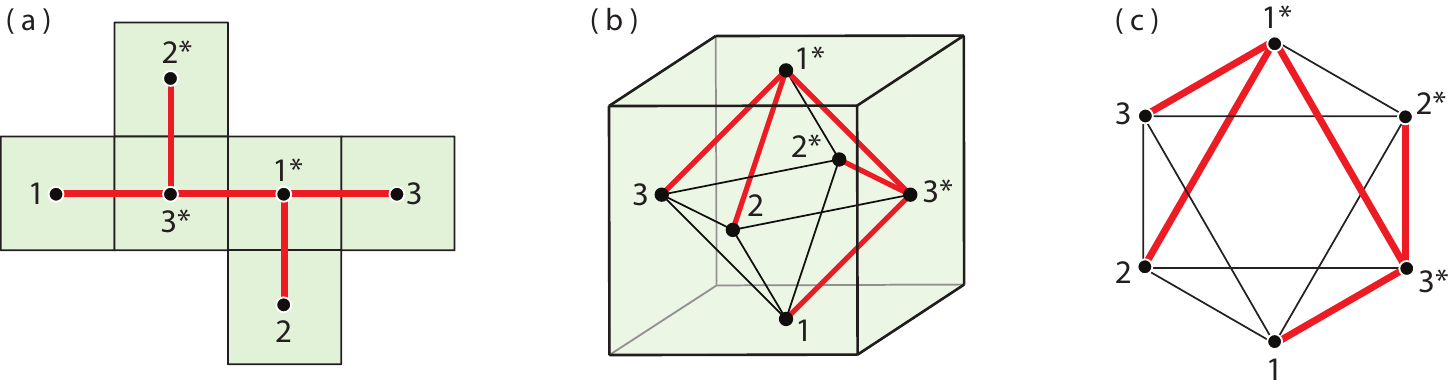}
\caption{Unfolding a 3-cube with its spanning tree on the Roberts graph.}
\label{f:cubetree}
\end{figure}

\begin{exmp}
Figure~\ref{f:cubetree}(a) considers an edge unfolding of the 3-cube with its underlying dual tree.  This appears as a spanning tree on the 1-skeleton of the octahedral dual (b), redrawn on the $3$-Roberts graph (c).
\end{exmp}

Recall that a ridge unfolding of an $n$-cube is a connected arrangement of its $2n$ facets, developed isometrically into hyperplane $\R^{n-1}$.  Begin the unfolding by choosing a (base) facet $b$ of the $n$-cube, placing it on the hyperplane.  Then the normal vector $n_b$ to $b$ becomes normal to the hyperplane. Consider an adjacent facet $c$ to $b$, and roll the cube along the ridge between these facets, with facet $c$ now landing on the hyperplane.  Figure~\ref{f:cube-rotate} shows the orthogonal projection of such a roll, with $c*$ and $b*$ corresponding to the antipodal facets of $c$ and $b$, and the marked red edge representing the ridge between $c$ and $b$.

\begin{figure}[h]
\includegraphics[width=1\textwidth]{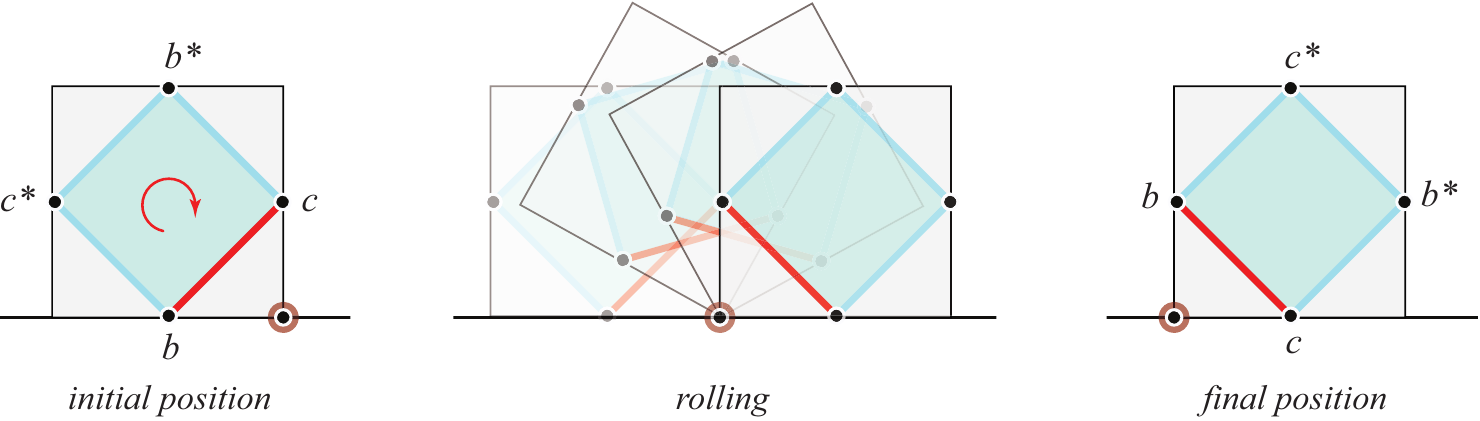}
\caption{Rolling a cube along a hyperplane.}
\label{f:cube-rotate}
\end{figure}

Since we rotate only along the plane spanned by the normal vectors $n_b$ and $n_c$, the remaining directions stay fixed in the development. This is captured combinatorially as a rotation of a subgraph of the Roberts graph: 

\begin{defn}
A \emph{roll} from base facet $b$ towards an adjacent facet $c$ rotates the four nodes $\{b, c, b*, c*\}$ of the Roberts graph along the quadrilateral (keeping the remaining nodes fixed), making $c$ the new base facet.
\end{defn}

\noindent Figure~\ref{f:5d-spin} shows an example for the $5$-cube, where the highlighted quadrilateral (depicting the roll) is invoking the colored square of Figure~\ref{f:cube-rotate}. 

\begin{figure}[h]
\includegraphics[width=\textwidth]{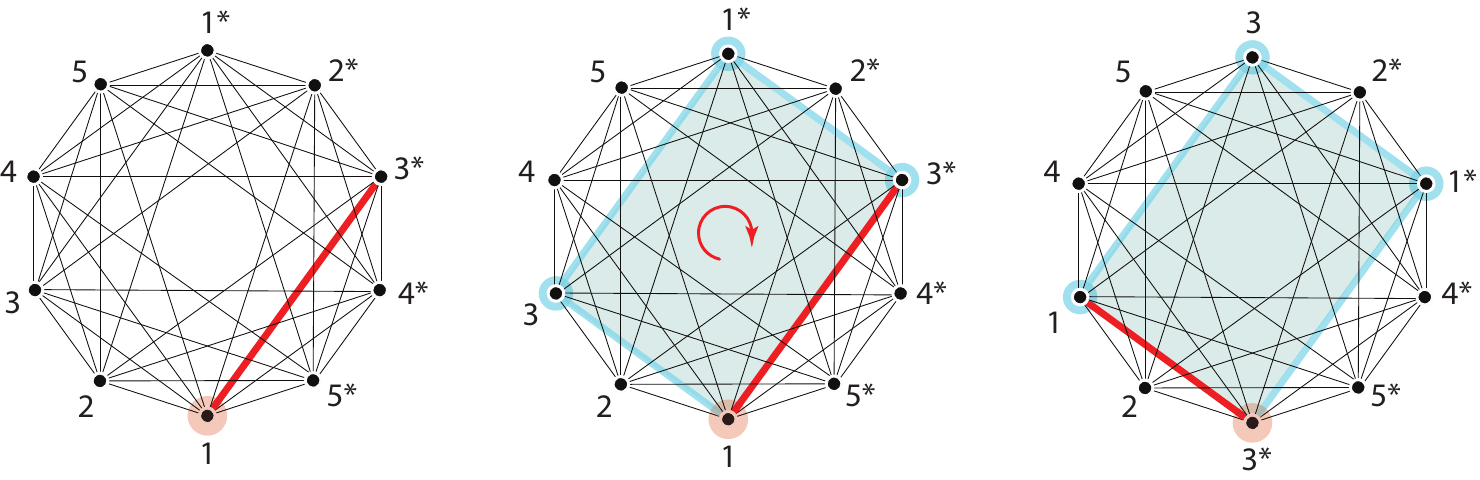}
\caption{A roll of a $5$-cube, rotating facet 1 towards facet $3^*$.} 
\label{f:5d-spin}
\end{figure}

\subsection{}

The advantage of unfolding a cube (compared to an arbitrary convex polytope) into hyperplane $\R^{n-1}$ is that its $(n-1)$-cube facets naturally tile this hyperplane. We  exploit this  by recasting the unfolding in the language of lattices. Without loss of generality, we can situate a ridge unfolding of the $n$-cube so that the centroid of each facet occupies a point of the integer lattice $\Z^{n-1}$ of $\R^{n-1}$.  To see the lattice structure manifest in the $n$-Roberts graph, we imbue the latter with a \emph{coordinate system}:  arbitrarily label the $2n-2$ edges of the $n$-Roberts graph incident to the base node with the directions
$$\{x_1, -x_1, x_2, -x_2, \dots, x_{n-1}, -x_{n-1}\} \, ,$$ 
where edges incident to antipodal nodes get opposite directions.\footnote{The isometry group of the cube, the Coxeter group of type $B$, acts simply transitively on these edges. Without loss of generality, we can choose a counterclockwise labeling of the edges in cyclic order.}   
Figure \ref{f:directions} shows examples of coordinate systems for the 3D, 4D, and 5D cases.

\begin{figure}[h]
\includegraphics[width=.9\textwidth]{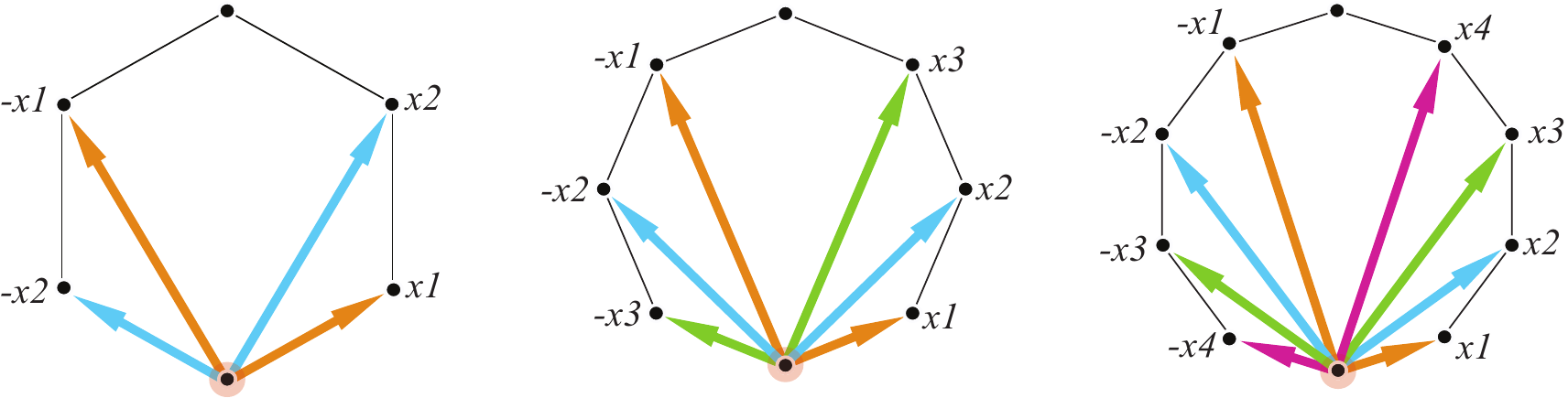}
\caption{Coordinate systems for 3D, 4D, and 5D cubes.}
\label{f:directions}
\end{figure}

These $n-1$ directions are mapped to the axes of the $\R^{n-1}$ hyperplane into which the $n$-cube unfolds.  
In particular, the $2n-2$ ridges of the $n$-cube incident to the base facet are in bijection with these coordinate directions, with opposite directions corresponding to parallel ridges of the facet.  The roll keeps track of the  combinatorics, whereas the coordinate system shows the direction of unfolding in the lattice.  This is made precise:

\begin{prop}  \label{p:roll}
Let $T$ be a spanning tree of the $n$-Roberts graph with a coordinate system.  The unfolding of the $n$-cube along $T$ into $\R^{n-1}$ can be obtained by mapping $T$ to the lattice $\Z^{n-1}$ through a sequence of rolls.
\end{prop}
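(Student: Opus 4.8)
The plan is to track the unfolding facet-by-facet along the tree, showing that the physical act of rolling the $n$-cube is faithfully recorded by the combinatorial roll together with a \emph{transported} coordinate system. Root $T$ at the base node $b$, and for each node $f$ let $b = f_0, f_1, \dots, f_k = f$ be the unique path in $T$. First I would define, by induction along such paths, a coordinate system $\chi_f$ on the $2n-2$ edges of the Roberts graph at $f$: set $\chi_b$ to be the given labeling, and if $c$ is a child of $f$ with edge $fc$ carrying the $\chi_f$-label $\pm x_i$, let $\chi_c$ be obtained from $\chi_f$ by the rule dictated by a roll --- the edge $cf$ receives $\mp x_i$, the edge $cf^*$ receives $\pm x_i$, and every edge $cg$ with $g \notin \{f, f^*\}$ inherits the $\chi_f$-label of $fg$. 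This last clause uses that such a $g$, being neither $c$ nor $c^*$, is adjacent to both $f$ and $c$ in the Roberts graph, while $f^*$ is adjacent to $c$. Since $T$ is a tree each $\chi_f$ is well defined, and one checks inductively that $\chi_f$ is again a coordinate system (a bijection from the edges at $f$ to $\{\pm x_1, \dots, \pm x_{n-1}\}$ sending antipodal edges to opposite directions). Finally set $P(f) \in \Z^{n-1}$ to be the sum over $j = 1, \dots, k$ of the unit vector named by the $\chi_{f_{j-1}}$-label of the edge $f_{j-1}f_j$.

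The heart of the argument is a single local lemma: if the $n$-cube is placed with facet $f$ on the hyperplane, its centroid at $P(f)$, and its ridges pointing along the directions prescribed by $\chi_f$, then rolling it over the ridge corresponding to the $\chi_f$-label $\pm x_i$ of edge $fc$ brings $c$ onto the hyperplane with centroid at $P(f) \pm e_i = P(c)$ and ridges pointing along the directions prescribed by $\chi_c$. This is the elementary observation about rolling a unit cube over one of its ridges: the shared ridge of $f$ and $c$ lies on the $\mp e_i$ side of $c$, the opposite ridge of $c$ (on the $\pm e_i$ side) is exactly the one across which the antipodal facet $f^*$ would next come down, and every ridge of $c$ perpendicular to $e_i$ is parallel to --- and leads to the same facet as --- the corresponding ridge of $f$. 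Matching this against the definition of a roll shows that the four facets $\{f, c, f^*, c^*\}$ undergo precisely the quadrilateral rotation of the roll while the remaining facets stay fixed, and that the coordinate system is transported by exactly the rule used to define $\chi_c$.

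With the local lemma in hand, the proposition follows by induction on the number of nodes of $T$. Traversing $T$ --- say along a depth-first Euler tour, using that a roll is invertible so that one may also ``roll back'' along an already-traversed edge --- realizes the development of the $n$-cube along $T$ as a sequence of rolls, and by the local lemma each forward roll places the new facet $f$ with centroid at $P(f)$, a lattice point of $\Z^{n-1}$. Hence the unfolding along $T$ is exactly the image of $T$ under this sequence of rolls, with each node $f$ sent to $P(f)$.

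The step I expect to be the main obstacle is the local lemma, and specifically the sign bookkeeping: one must pin down orientation conventions (the counterclockwise labeling of the base edges noted in the text, and a consistent choice of which cyclic direction the quadrilateral rotation of a roll takes) and then verify that the geometric roll and the combinatorial roll agree not merely as maps of the set $\{f, c, f^*, c^*\}$ but with the correct assignment of $\pm x_i$ to the edges $cf$ and $cf^*$. Everything else --- well-definedness of $\chi_f$, the inductive claim that $\chi_f$ remains a coordinate system, and the global induction on $T$ --- is routine once the local step is secured.
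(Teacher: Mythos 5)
Your proposal is correct and follows essentially the same route as the paper: the paper's proof also anchors the coordinate system to the current base facet, observes that the four labels $\{b,c,b^*,c^*\}$ permute under the roll while $b^*$ lands in the $x$ direction from the new base $c$, and then traverses the path from the root to each node. Your write-up is simply a more explicit formalization (the transported labelings $\chi_f$, the lattice map $P(f)$, and the local lemma) of the bookkeeping the paper states informally.
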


\begin{proof}
Choose some base facet $b$ of $T$ and map it to some point $p_b \in \Z^{n-1}$. Let node $c$ be adjacent to $b$ along $T$ with associated direction $x$ from the coordinate system.   The roll from $b$ towards $c$ maps node $c$ to the point in $\Z^{n-1}$ that is adjacent to $p_b$ in direction $x$.   The four facet labels $\{b, c, b*, c*\}$ permute with the roll of the cube whereas the coordinate system directions are always anchored to the base facet.  In particular,  after the roll,  facet $b*$ lies in the $x$ direction with respect to the new base facet $c$, since the plane spanned by normal vectors $n_b$ and $n_c$ was rotated.

Given any node $t$ of $T$, traverse the path between $b$ and $t$ through a series of rolls as described above; this maps all the nodes of $T$ into $\Z^{n-1}$.  To obtain the unfolding of the $n$-cube, simply replace each mapped point of the lattice with an $(n-1)$-cube.
\end{proof}

\begin{exmp}
Figure \ref{f:3dpathunfold} shows an unfolding of the 3-cube along a spanning path using Proposition~\ref{p:roll}.  At each iteration, there is a roll of the Roberts graph and a direction of unfolding based on the given coordinate system.   The unfolded facets are colored white, and the unfolded ridges become dashed-lines.
Figure \ref{f:4dpathunfold} provides an example of an unfolding of the 4-cube along a spanning path.
\end{exmp}

\begin{figure}[h]
\includegraphics[width=\textwidth]{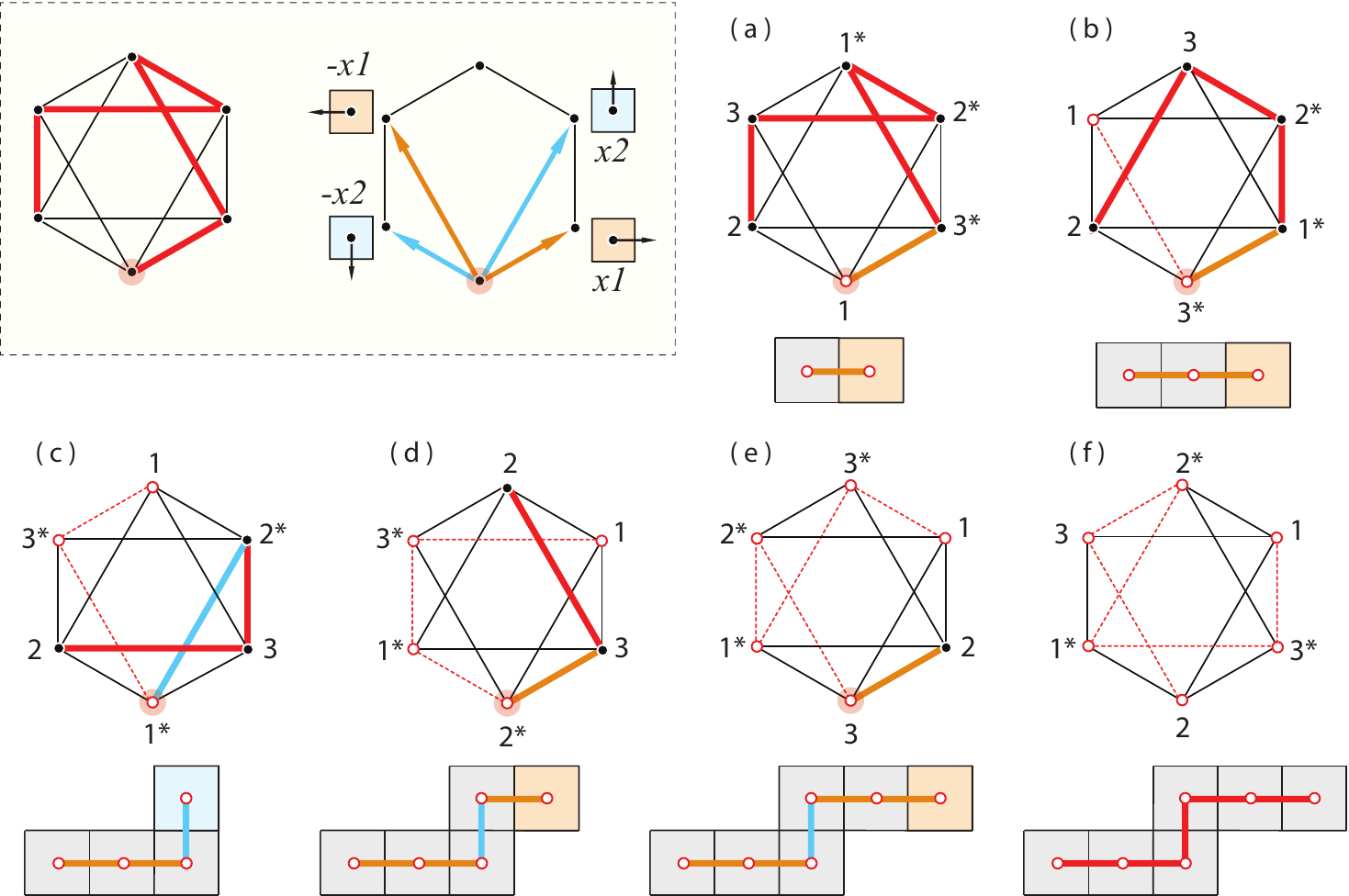}
\caption{Unfolding a 3-cube along a spanning path.}
\label{f:3dpathunfold}
\end{figure}

\begin{figure}[h]
\includegraphics[width=\textwidth]{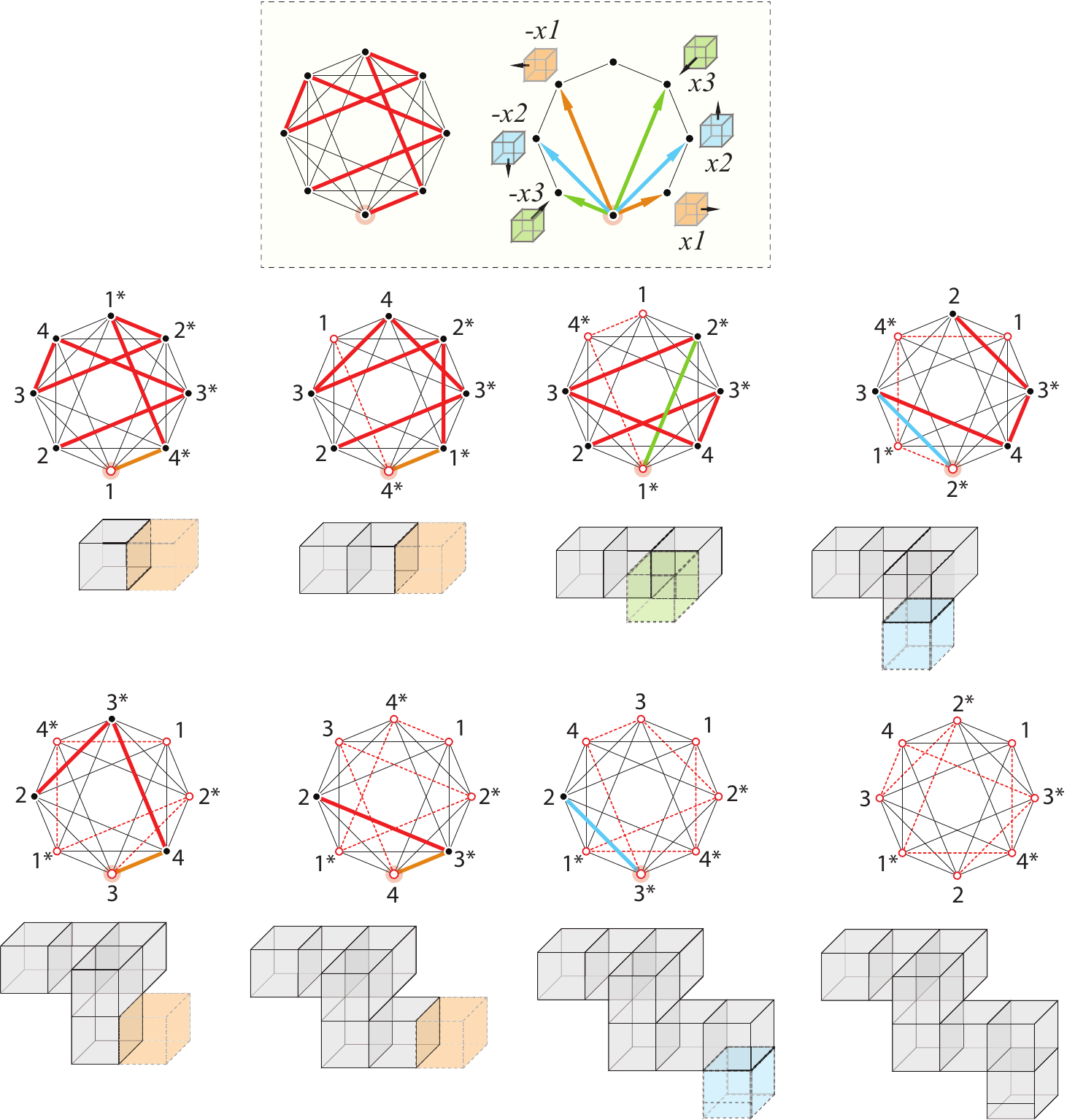}
\caption{Unfolding a 4-cube along a spanning path.}
\label{f:4dpathunfold}
\end{figure}

\subsection{}

Up to symmetry, there are 11 unfoldings of the 3-cube and 261 unfoldings of the 4-cube, all of them  nets. We now show that this property extends to all dimensions, that every unfolding of the $n$-cube results in a valid net.  

\begin{lem} \label{l:uturn}
Let $T$ be a spanning tree of the $n$-Roberts graph with a coordinate system.  If direction $x$ is used in the unfolding along some path of $T$, direction $-x$ will not be used in the unfolding along this path.
\end{lem}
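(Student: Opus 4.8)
The plan is to follow a single coordinate axis, say $x_1$, along the path and show it behaves monotonically: I would track which facet lies in the $+x_1$ direction from the current base and which lies in the $-x_1$ direction, prove that a roll in any direction other than $\pm x_1$ leaves this data untouched while a roll in direction $\pm x_1$ changes it in a completely rigid way, and conclude that a path using both $x_1$ and $-x_1$ would be forced to revisit a facet — impossible for a path in a tree.

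First I would fix notation. By re-rooting $T$ — which changes the development only by an isometry of $\Z^{n-1}$, hence by a signed permutation of the axes together with a translation, and so preserves the property ``uses $x$ and $-x$'' — I may assume the path emanates from the base; write it $b = v_0, v_1, \dots, v_k$ and let $d_j \in \{\pm x_1, \dots, \pm x_{n-1}\}$ be the direction of the roll taking $v_{j-1}$ to $v_j$, which by Proposition~\ref{p:roll} equals the unit lattice vector $p_{v_j} - p_{v_{j-1}}$. For a base facet $f$ and an index $i$, let $\phi_i^{+}(f)$ and $\phi_i^{-}(f)$ be the nodes adjacent to $f$ in the Roberts graph along the edges labeled $x_i$ and $-x_i$; since antipodal nodes receive opposite labels, $\phi_i^{-}(f) = (\phi_i^{+}(f))*$.

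The heart of the argument — and the step I expect to be the main obstacle — is a precise description of how a roll transforms this data: rolling from $f$ towards $g = \phi_i^{+}(f)$ should yield a new base $g$ with $\phi_i^{+}(g) = f*$, $\phi_i^{-}(g) = f$, and $\phi_j^{\pm}(g) = \phi_j^{\pm}(f)$ for every $j \neq i$. The first two identities are already contained in the last paragraph of the proof of Proposition~\ref{p:roll}; the third is what needs care. The point is that the roll rotates the cube only inside the plane spanned by the normals $n_f$ and $n_g$, and the facet $\phi_j^{\pm}(f)$ is distinct from $f, f*, g, g*$, hence non-antipodal to, and therefore adjacent to, both $f$ and $g$; its normal is thus orthogonal to both $n_f$ and $n_g$ and is fixed by the roll, so it stays the neighbor of the new base in the same coordinate direction. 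Turning the informal phrases ``anchored to the base facet'' and ``rotate only along the plane spanned by $n_b$ and $n_c$'' into this clean combinatorial statement is the delicate part.

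Granting this, the conclusion is short. Suppose for contradiction that both $x_1$ and $-x_1$ appear among $d_1, \dots, d_k$. List the steps using the first axis as $m_1 < \dots < m_r$; since their signs are not all equal, some consecutive pair is opposite, say $d_{m} = x_1$ and $d_{m'} = -x_1$ with $m = m_s$, $m' = m_{s+1}$, and no first-axis step strictly between them (the other sign pattern is symmetric). By the transformation rule, the roll at step $m$ makes $\phi_1^{-}(v_m) = v_{m-1}$, and each roll at steps $m+1, \dots, m'-1$ is in a direction $\pm x_j$ with $j \neq 1$, so none of them disturbs $\phi_1^{-}$; hence $\phi_1^{-}(v_{m'-1}) = v_{m-1}$. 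But $d_{m'} = -x_1$ means step $m'$ rolls $v_{m'-1}$ onto $\phi_1^{-}(v_{m'-1})$, forcing $v_{m'} = v_{m-1}$ — contradicting that $v_0, \dots, v_k$ are distinct vertices of a path in the tree $T$. Running this argument for each axis completes the proof.
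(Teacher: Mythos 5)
Your proof is correct and takes essentially the same route as the paper's: both arguments rest on the observation that after a roll in direction $x$ the previous base facet occupies the $-x$ slot, and that only rolls along $\pm x$ can alter that slot's occupant, so any later $-x$ roll would be forced onto an already-visited facet of the path. You formalize this with the $\phi_i^{\pm}$ bookkeeping and locate the first sign change along the axis, whereas the paper makes the identical point informally (``continuing to obstruct motion in the $-x$ direction'').
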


\begin{proof}
Assume we roll along a path in the $x$ direction, moving the current base facet $b$ into the $-x$ direction. Since  $b$ has now been visited, it cannot be used again in the unfolding. Thus, the only way a roll along direction $-x$ can occur is if $b$ is rotated out of that direction.  However, the only moves that can displace  $b$ are rolls along the $x$ and $-x$ directions.  The latter is not possible and the former simply replaces $b$ with another visited facet, continuing to obstruct motion in the $-x$ direction.
\end{proof}

\begin{exmp}
Figure~\ref{f:3dpathunfold}(ab) shows an example where the first roll is in direction $x_1$, moving facet 1 into the $-x_1$ position, and facet $3^*$ into the base position.  Since facet $1$ has been visited, rolling in direction $-x_1$ is restricted. Another roll in Figure~\ref{f:3dpathunfold}(bc) displaces $1$ but simply replaces it with facet $3^*$, which has now been visited.
\end{exmp}

\begin{thm} 
Every ridge unfolding of an $n$-cube yields a net.
\end{thm}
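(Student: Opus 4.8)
The plan is to show that the lattice map from Proposition~\ref{p:roll} is injective, i.e.\ that no two facets of the tree $T$ land on the same point of $\Z^{n-1}$; since each mapped point is then replaced by a distinct unit $(n-1)$-cube, the resulting arrangement tiles without overlap and is a net. Fix a base facet $b$ mapped to $p_b$, and consider two distinct nodes $s,t$ of $T$. Because $T$ is a tree, there is a unique path $\gamma$ in $T$ from $s$ to $t$; I want to argue that the displacement vector in $\Z^{n-1}$ accumulated along $\gamma$ is nonzero. Reducing to paths this way is legitimate: the position of $t$ relative to $s$ is obtained by performing the sequence of rolls along $\gamma$ starting from $s$ as base, exactly as in the proof of Proposition~\ref{p:roll}.

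**Next I would** analyze the displacement along a single path $\gamma = (v_0, v_1, \dots, v_k)$ in terms of the coordinate directions. Each roll contributes a step $\pm x_i$ in some coordinate direction, but the subtlety---and the reason Lemma~\ref{l:uturn} is needed---is that the coordinate system is anchored to the \emph{current} base facet, so the ``names'' of the directions shift as we roll. The key structural fact to extract is that along $\gamma$, each coordinate axis $x_i$ is used in only one sense: Lemma~\ref{l:uturn} says that if a roll in direction $x$ occurs, no roll in direction $-x$ occurs later along the same path. So the net displacement along $\gamma$ is a sum $\sum_i a_i \, e_i$ where each $a_i \ge 0$ counts how many times axis $i$ was used (with a fixed sign per axis), summed over a \emph{relabeling} that happens as rolls occur. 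The point is that whenever axis $i$ is used along the path, it is used monotonically in one direction, so the contributions cannot cancel.

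**The main obstacle** is bookkeeping the relabeling carefully enough to conclude $\sum a_i e_i \ne 0$ when $\gamma$ is nontrivial. A roll along direction $x$ permutes the four labels $\{b,c,b*,c*\}$ and leaves the other axes untouched, so the directions other than $\pm x$ are genuinely fixed by that roll; only the axis just used gets its two labeled endpoints swapped (facet $b*$ moving into the $x$ slot of the new base). I would make precise that, viewed in the fixed frame of the \emph{initial} base $v_0$, each step of the path still moves in some standard basis direction $\pm e_j$, and that Lemma~\ref{l:uturn} applied along $\gamma$ forbids a later step from being $-e_j$ once $+e_j$ has appeared. Hence the total displacement has all coordinates of one consistent sign on each axis and at least one nonzero coordinate (since $\gamma$ has length $\ge 1$), so it is nonzero. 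Therefore $s$ and $t$ map to distinct lattice points, the facets are placed at distinct integer points of $\R^{n-1}$, and the unfolding is a non-overlapping net.

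**Finally,** I would note that this argument uses only that $T$ is a tree (so that the $s$--$t$ path is unique, hence the relative displacement is well defined and path-independent) together with Proposition~\ref{p:roll} and Lemma~\ref{l:uturn}; no dimension-specific input is required, so the conclusion holds for every $n$. I expect the write-up to spend most of its effort on the frame-change/relabeling lemma that legitimizes passing from ``directions anchored to the moving base'' to ``fixed $\pm e_j$ steps in the initial frame,'' since that is exactly the place where a naive argument could hide a gap.
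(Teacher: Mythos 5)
Your proposal is correct and follows essentially the same route as the paper: both reduce to showing that along any path of the spanning tree the accumulated lattice displacement can never cancel back to zero, invoking Lemma~\ref{l:uturn} to rule out a coordinate direction and its negative both occurring on the same path, and hence conclude that distinct facets land on distinct lattice points. The only difference is presentational --- you make explicit the re-rooting at $s$ and the frame/relabeling bookkeeping that the paper's proof leaves implicit in its claim that Euclidean distance strictly increases with combinatorial distance.
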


\begin{proof}
Consider an unfolding of the $n$-cube, given by a spanning tree $T$ on the $n$-Roberts graph.
By Lemma~\ref{l:uturn}, antipodal directions will never appear in unfolding of paths.  Thus, as the combinatorial distance between any two nodes of a path along the spanning tree $T$ increases, the Euclidean distance of their respective facets in the hyperplane $\R^{n-1}$ (under the mapping to the integer lattice from Proposition~\ref{p:roll}) strictly increases.  Since the facets in the unfolding along any path of $T$ do not overlap, the unfolding of the entire tree $T$ results in a net.
\end{proof}

%
%
\section{Packings and Partitions} \label{s:pack}
\subsection{}

Having unfolded cubes into their nets, we now turn to packing these nets into boxes.  A \emph{box} (or \emph{orthotope}) is the Cartesian product of intervals, and the \emph{bounding box} of a net is the smallest box containing the net, with box sides parallel to the ridges of the net. 

\begin{defn}
An $n$-\emph{cube partition} is an integer partition of $3n-2$ into $n-1$ parts, where each part is at least two.
\end{defn}

\begin{exmp} 
Figure~\ref{f:4dunfolds} displays four spanning trees of the 4-cube and  their corresponding nets in bounding boxes. Notice that the dimensions of each bounding box form a $4$-cube partition.  In particular, these are all the possible $4$-cube partitions.  The following result claims that all 261 nets of the 4-cube must fit into one of these four boxes.
\end{exmp}

\begin{figure}[h]
\includegraphics[width=\textwidth]{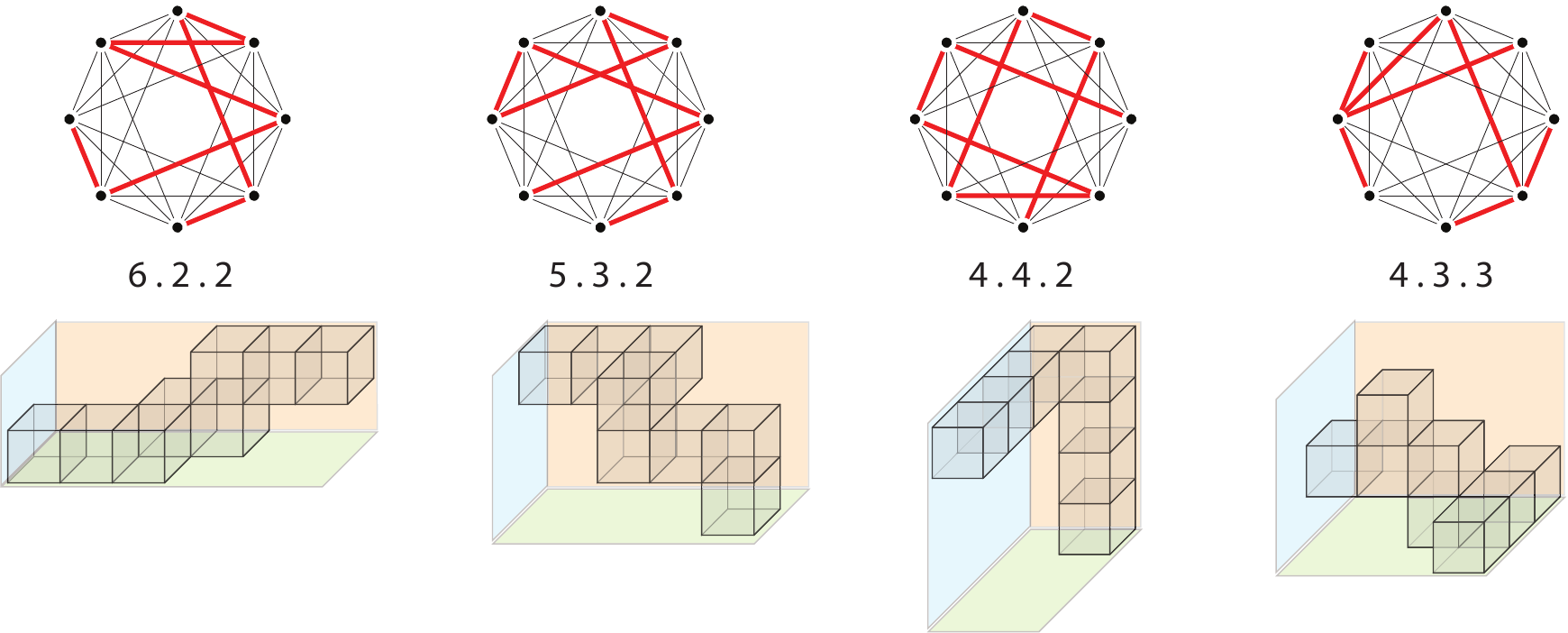}
\caption{Spanning trees and bounding boxes of 4-cube nets.}
\label{f:4dunfolds}
\end{figure}

\begin{thm} \label{t:pack}
For every net of an $n$-cube, the dimensions of its bounding box is an $n$-cube partition.
\end{thm}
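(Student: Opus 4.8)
The plan is to analyze the "footprint" of the unfolding in the integer lattice $\Z^{n-1}$ guaranteed by Proposition~\ref{p:roll}, and to show that in each of the $n-1$ coordinate directions the net occupies a contiguous interval of a certain length, with the lengths summing to $3n-2$. First I would fix a spanning tree $T$ with a coordinate system, and for each coordinate axis $x_i$ let $\ell_i \ge 1$ be the number of distinct integer coordinates in the $i$-th slot occupied by the mapped facets (so the bounding box side in direction $i$ has length $\ell_i$, measuring facets). Since the $2n$ facets of the net together occupy $2n$ lattice points, and the net is connected, I first want to argue each $\ell_i \ge 2$: this is the claim that every direction is actually used. The reason is that the two facets perpendicular to $x_i$ — i.e.\ the pair of antipodal facets whose normal is the $x_i$-axis after unfolding — must appear at different $i$-coordinates, because a facet and the facet reached by rolling in direction $x_i$ differ precisely in that coordinate, and in the Roberts graph every facet is adjacent (hence rolled onto) along that axis at some point in $T$. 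Making the bookkeeping here precise — tracking which physical facet sits in which lattice slot as rolls permute the quadruples $\{b,c,b*,c*\}$ — is where I would spend the most care.

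Next I would establish the key count $\sum_{i=1}^{n-1}\ell_i = 3n-2$. The natural approach is to track, for each axis $x_i$, the set $S_i \subseteq \Z$ of $i$-th coordinates used, and to understand how $|S_i|$ grows along the tree. By Lemma~\ref{l:uturn}, along any path in $T$ the directions $x_i$ and $-x_i$ are never both used, so each axis contributes a monotone "staircase": starting from the base facet's coordinate, the set $S_i$ is an interval that can only be extended on one side as we traverse any particular path, though different branches of the tree may extend it on opposite sides. I would count the total "spread" by summing over the $2n-1$ edges of $T$: each edge is a roll in some direction $\pm x_i$, and I claim that across all $2n-1$ edges, exactly $\sum_i (\ell_i - 1)$ of them are "progress" edges that first reach a new coordinate value, while the remaining $2n-1-\sum_i(\ell_i-1)$ edges are "backtracking within the visited range." Combining $\sum_i(\ell_i-1) \le$ (number of edges) with a matching lower bound — forced by the fact that all $2n$ facets must land on distinct lattice points, so the bounding box must contain at least $2n$ cells, i.e.\ $\prod \ell_i \ge 2n$, together with $\ell_i\ge 2$ — should pin down $\sum \ell_i$. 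Concretely, once $\ell_i \ge 2$ for all $n-1$ axes we have $\sum \ell_i \ge 2(n-1)$, and the antipodal structure forces each of the $n$ antipodal pairs of facets to be separated in some direction, giving $n$ "extra" units of length beyond the minimum, i.e.\ $\sum \ell_i = 2(n-1) + n = 3n-2$.

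The cleanest route to that last equality is probably a direct injective/counting argument: assign to each of the $2n$ facets its lattice point, observe the base facet and each facet adjacent to it sit along the coordinate axes through the origin, and show by induction on $|T|$ that adding a leaf increases $\sum \ell_i$ by exactly $0$ or $1$, with the number of $+1$ steps totaling $n$. The induction is delicate because adding a leaf that rolls in direction $x_i$ increases $\ell_i$ only if that roll reaches a coordinate outside the current range of $S_i$, and Lemma~\ref{l:uturn} (applied along the path from the base to the new leaf) is exactly what guarantees consistency of "which side" $S_i$ grows on along that path. The main obstacle, then, is the combinatorial bookkeeping linking rolls of the Roberts-graph quadruples to increments of the $\ell_i$: I would want a clean invariant — perhaps "at every stage, for each axis $i$, the two facets whose normal is parallel to $x_i$ occupy the two extreme $i$-coordinates currently realized, and these extremes differ by $\ell_i - 1$" — from which both $\ell_i \ge 2$ and the total $\sum \ell_i = 3n-2$ fall out. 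Verifying that this invariant is preserved under a roll, using the definition of roll and the coordinate system's anchoring to the base facet (as in the proof of Proposition~\ref{p:roll}), is the technical heart of the argument.
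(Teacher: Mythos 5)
Your setup (the lattice picture from Proposition~\ref{p:roll}, the side lengths $\ell_i$, and the reduction to ``$\ell_i \ge 2$ for all $i$'' plus ``$\sum_i \ell_i = 3n-2$'') matches the paper's, and your argument that every axis is actually used is essentially the paper's closing remark. The gap is the central equality $\sum_i \ell_i = 3n-2$: none of the three routes you sketch establishes it. The product bound $\prod_i \ell_i \ge 2n$ together with $\ell_i \ge 2$ does not pin down the sum (for $n=4$ a $2\times 2\times 2$ box satisfies both constraints but has $\sum_i \ell_i = 6 \ne 10$). The claim that each of the $n$ antipodal pairs contributes one extra unit of length beyond $2(n-1)$ is unsupported numerology: two facets can be separated in some direction without either of them extending the bounding box in that direction. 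And your proposed induction has the wrong count: starting from $\sum_i \ell_i = n-1$ for the base facet and ending at $3n-2$ requires $2n-1$ increments spread over $2n-1$ leaf additions, so \emph{every} roll must be a ``$+1$'' step, not $n$ of them --- your framing of some edges as ``backtracking within the visited range'' presumes exactly the possibility that must be ruled out.

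The missing idea, which is the heart of the paper's proof, is to show directly that no roll can fail to enlarge the bounding box. If the roll from $b$ to a new facet $c$ in direction $x$ crosses the hyperplane $\mathcal H$ supporting their common ridge and does not increase the box number, then some already-placed facet $d$ lies on the same side of $\mathcal H$ as $c$; the tree path from $d$ through $b$ to $c$ then crosses $\mathcal H$ in both the $-x$ and $+x$ directions, contradicting Lemma~\ref{l:uturn}. This one-paragraph contradiction replaces all of your candidate counting arguments and immediately gives $\sum_i \ell_i = (n-1) + (2n-1) = 3n-2$. You correctly identified Lemma~\ref{l:uturn} as the relevant tool, but did not locate the step where it does the work.
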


\begin{proof}
Each net of the $n$-cube has $2n$ facets that need to be unfolded in $\R^{n-1}$.  Since each facet is an $(n-1)$-cube, the placement of the first facet in the unfolding contributes $n-1$ to the bounding box number of the net, one for each of its $n-1$ dimensions.  We show that each of the remaining $2n-1$ facets of the unfolding increases the bounding box number by exactly 1, resulting in a total box number of $1 \cdot (n-1) + (2n-1) \cdot 1 = 3n-2$.  

Suppose (by contradiction) that in the unfolding, the roll from facet $b$ to adjacent facet $c$ in direction $x$ does not increase the bounding box number of the current net.  Assume the ridge between $b$ and $c$ is supported by some hyperplane $\mathcal H$ of $\R^{n-1}$.  Since the box number did not increase,  there must be another facet (call it $d$) in the current unfolding that lies on the same side of  hyperplane $\mathcal H$ as $c$.  Thus, the unfolding of the path between facets $c$ and $d$ must have crossed $\mathcal H$ at least twice, moving along $x$  in both the positive and negative directions, contradicting Lemma~\ref{l:uturn}.

Finally, it needs to be shown that our cube will roll in all $n-1$ unfolding dimensions (satisfying the requirement that each part of a cube partition is a least two).  But the cube net is a spanning tree of the Roberts graph, with the unfolding forced to visit all the nodes.  And such visits can only be accomplished by rolling along each of the $n-1$ distinct directions.
\end{proof}

\subsection{}

The converse of Theorem~\ref{t:pack} also holds: given an integer partition of $3n-2$ into $n-1$ parts, there exists an unfolding of an $n$-cube whose bounding box dimensions match the partition.  The remainder of this section is devoted to proving this result. As discussed earlier, the placement of the first facet in the unfolding of the $n$-cube contributes $n-1$ to the bounding box number. Thus, the cube partition can be reinterpreted as an integer partition of $2n-1$ (the remaining facets) into $n-1$ parts (the possible directions), with each part at least one.  For such a partition, our task is to find a sequence of rolls along the $n-1$ directions so that the $2n-1$ facets are unfolded into their respective partitioned directions.
Without loss of generality, we consider rolls only in the positive directions.

In order to construct cube unfoldings for such partitions, we reinterpret the Roberts graph as a token sliding game, with Figure~\ref{f:slidegame} serving as a Rosetta stone.  Consider the first column of this figure, where the $n$-Roberts graph on top is unraveled below into a game board with $n-1$ slides (appropriately color-coded). Here, the base node of the Roberts graph is  replaced by our given partition, one for each direction, with the $2n-1$ positions represented by black tokens.  The goal of this game is to move these tokens into the $2n-1$ empty slots on the game board above by a sequence of slides, corresponding to rolls of the Roberts graph. 

\begin{figure}[h]
\includegraphics[width=\textwidth]{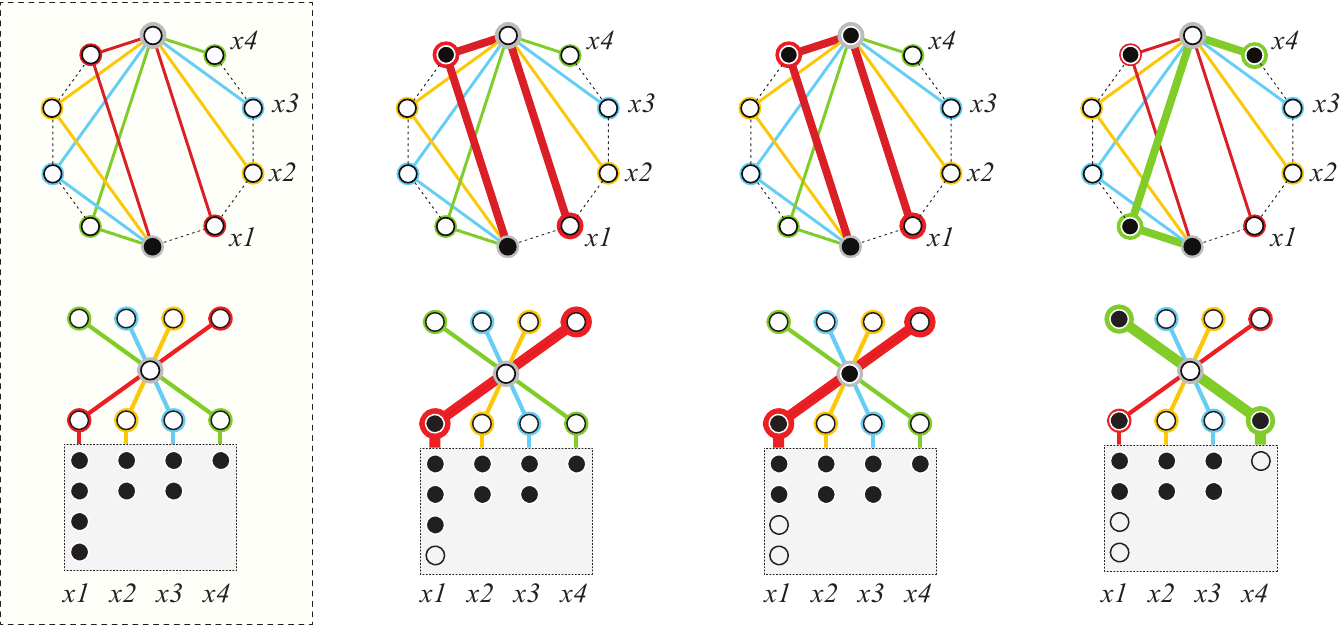}
\caption{Rolls on the Roberts graphs reinterpreted as a token sliding game.}
\label{f:slidegame}
\end{figure}

The top row of Figure~\ref{f:slidegame} shows a $5$-cube rolling twice in the $x_1$ direction, followed by a roll in the $x_4$ direction, and a roll in the $x_3$ direction. 
The bottom row displays the corresponding tokens moving along their appropriate slides, leaving the partition box and occupying empty slots on the game board above.  The features of the token game are inherited from the properties of rolls:

\begin{enumerate} \squeezelist
\item Each roll of the Roberts graph in a particular direction slides \emph{all} the tokens along that direction one place up. 
\item When a token reaches the end of its slide (eg, direction $x_4$, as displayed by the fourth column of Figure~\ref{f:slidegame}), it can no longer use that direction.
\item The antipode to the base (topmost on the Roberts graph) acts as a \emph{transfer point}, moving tokens from one directional slide into another.
\end{enumerate}

\begin{thm} \label{t:party}
For any $n$-cube partition, there exists a path unfolding of an $n$-cube whose bounding box dimensions matches the partition.
 \end{thm}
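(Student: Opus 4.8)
The plan is to win, for an arbitrary $n$-cube partition, the token sliding game of Figure~\ref{f:slidegame}. As noted just before the statement, it is enough to handle the \emph{reduced} partition: given $a_1 \ge a_2 \ge \cdots \ge a_{n-1} \ge 1$ with $a_1 + \cdots + a_{n-1} = 2n-1$, one must exhibit a sequence of $2n-1$ rolls, all in positive directions, that uses direction $x_i$ exactly $a_i$ times and whose successive base nodes exhaust all $2n$ nodes of the Roberts graph. Since consecutive bases are adjacent, such a sequence automatically traces a spanning \emph{path}, and the direction counts give it the prescribed bounding box. Two facts about the sorted partition organize the argument: $a_1 \ge 3$ (a partition of $2n-1$ into $n-1$ twos sums to only $2n-2$), and $a_{n-1} \le 2$ (the average $(2n-1)/(n-1)$ is less than $3$, so the least part is $1$ or $2$).

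The first step is to dispatch the case $a_{n-1}=2$ by induction on $n$, dropping one dimension at a time. Here $(a_1,\dots,a_{n-2})$ sums to $2(n-1)-1$ and so is a legitimate reduced $(n-1)$-cube partition; let $\sigma$ be a winning roll sequence for it. Then $(\,x_{n-1},\,\sigma,\,x_{n-1}\,)$ wins for the $n$-cube: the opening $x_{n-1}$ places the facet adjacent to the base in that direction and deposits the original base pair $\{b,b^{*}\}$ onto slide $x_{n-1}$, after which the rest of the configuration is \emph{identical} to the start of an $(n-1)$-cube, with slide $x_{n-1}$ permanently idle and holding only the already-placed $b$ together with the as-yet-unplaced $b^{*}$. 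Running $\sigma$ (which never touches slide $x_{n-1}$) visits every node of that $(n-1)$-cube, and the closing $x_{n-1}$ collects $b^{*}$, which has sat waiting on its slide the whole time. The base case $n=2$ is the single unfolding of the square, $\lambda=(3)$, got by rolling $x_1$ three times.

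The case $a_{n-1}=1$ is the crux. Direction $x_{n-1}$ is allotted one roll, which can sit at neither end of the sequence: played first it parks $b^{*}$ on the henceforth-frozen slide $x_{n-1}$, so $b^{*}$ is never placed, while played at (or next to) the end it strands the facet it pushes to the top of that slide. The remedy is to play the $x_{n-1}$-roll only when the node currently on top has \emph{already been placed} --- the earliest such moment being right after the base pair $\{b,b^{*}\}$ has been retired, which costs two rolls in whichever direction was used first --- and to keep enough later rolls in a large-multiplicity direction (possible since $a_1\ge 3$) to recover the pushed-up facet afterward. Peeling off this prefix leaves a strictly smaller instance of the same flavor (one fewer part equal to $1$, and eventually one fewer dimension), on which one recurses; equivalently, one describes the token routing outright, feeding the slides so that the idle slide $x_{n-1}$ only ever absorbs spent tokens and the transfer point hands spent tokens off to slides that have already finished.

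The step I expect to be the real obstacle is proving that the $a_{n-1}=1$ construction never stalls --- that each roll of the constructed sequence lands on a node not yet used as a base. Two failure modes must be ruled out: a slide freezing while it still carries an unplaced facet, and the active end of a live slide being occupied by an already-used facet exactly when that direction must next be rolled. Both are excluded by maintaining along the whole play the invariant that an antipodal pair both of whose nodes are placed can reside only on a slide that will never be rolled again, while every still-live slide presents a fresh node at its active end. This is exactly the single-path incarnation of Lemma~\ref{l:uturn} --- along a path, the reverse of a used direction is never used --- so Lemma~\ref{l:uturn}, which already powered the net theorem and the packing theorem~\ref{t:pack}, is once more the engine that makes the bookkeeping close.
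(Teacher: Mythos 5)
Your reduction to the token game and your restriction to positive-direction rolls match the paper's setup, and your sandwich argument $(x_{n-1},\sigma,x_{n-1})$ for the case $a_{n-1}=2$ is correct --- but note how little it covers: a reduced partition of $2n-1$ into $n-1$ parts all at least $2$ forces $(3,2,\dots,2)$, so the inductive case handles exactly one partition per dimension. Every partition containing a singleton falls into your $a_{n-1}=1$ case, which you yourself flag as "the real obstacle" and for which you give only a description of the difficulties plus an asserted invariant. The recursion there is not well-founded as stated: after "peeling off the prefix," the board is not in the initial configuration of a smaller game (tokens already sit in bottom slots and possibly at the transfer point), so the induction hypothesis does not apply to what remains; and with many singletons (e.g.\ $9+1+\dots+1$ for $n=8$) they must all be interleaved with the towers, not retired one at a time right after the pair $\{b,b^*\}$. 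The appeal to Lemma~\ref{l:uturn} cannot close this either: that lemma constrains which directions appear when unfolding an \emph{already given} spanning tree, whereas what you need is the converse-flavored fact that your proposed sequence of positive rolls never lands on an already-visited node. That is precisely the content that must be proved, and it is not.

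The paper avoids your case split entirely with one explicit global schedule: classify the $2n-1$ tokens into singletons $S$, tower bottoms $B$, tower middles $M$, and tower tops $T$; observe $|B|=|T|=(n-1)-|S|$ and $|M|=|S|+1$; then play (1) one roll in each tower direction, (2) rolls alternating $M,S,M,S,\dots,M$, and (3) one roll in each tower direction again. Each phase is verified directly against the transfer-point mechanics --- the transfer point is occupied exactly when a singleton or a tower top needs to be pushed to the end of its track, and is replenished at exactly those moments --- so no invariant has to be carried through an induction. To salvage your approach you would need either to write down the full move order for an arbitrary mix of singletons and towers and check your invariant move by move, or to strengthen the induction hypothesis to a statement about non-initial board states; as written, the main case of the theorem is unproved.
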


\begin{proof}
We provide an unfolding algorithm by rolling along directions satisfying a given partition.  Parts in the partition with more than one token are called \emph{towers}, whereas parts with exactly one token are dubbed \emph{singletons}.  Begin by decomposing the $2n-1$ tokens into four groups:
\begin{enumerate} \squeezelist
\item The set $S$ of tokens in the singletons.
\item The set $B$ of bottom tokens in each tower. 
\item The set $T$ of top tokens in each tower. 
\item The remaining set $M$ of (middle) tokens.
\end{enumerate}
It follows that $|T|  =  |B|  = (n-1) - |S|$
 and
$|M|  =  (2n-1) - |T| - |B| - |S|  =  |S| + 1.$
Figure~\ref{f:ordering} shows two distinct partitions of 15 tokens into 7 parts (when $n=8$), labeled according to the terminology above. 

\begin{figure}[h]
\includegraphics[width=.9\textwidth]{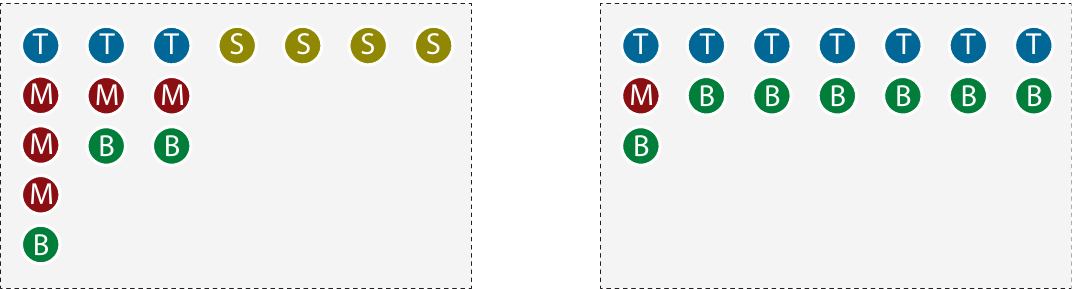}
\caption{Two partitions when $n=8$, with 15 elements into 7 parts.}
\label{f:ordering}
\end{figure}

\noindent Our algorithm is broken into three steps:

\medskip
 {\bf Step 1}: \emph{Perform one slide in each direction of a token from $B$.}  This is possible since the transfer point is empty; see Figure~\ref{f:solution}(abc).

\medskip
{\bf Step 2}: \emph{Perform alternating slides between tokens from $M$ and $S$, starting and ending with $M$, until all such tokens depleted.}  This is well-defined since $|M| = |S| + 1$. Since the first position on the game board along any element of $M$ already contains a token from Step 1, a slide along its direction moves this token into the transfer point; see Figure~\ref{f:solution}(d).  Now, sliding a token of $S$ fills the first and last positions along this directional track with tokens, making this direction unusable; see Figure~\ref{f:solution}(e).  This is ideal, for $S$ contains only one token in each direction.  After alternating between $M$ and $S$, depleting all elements of $S$, slide one final time along the last element of $M$, loading a token onto the transfer point; see Figure~\ref{f:solution}(f). 

\medskip
{\bf Step 3}: \emph{Perform one slide in each direction of a token from $T$.}  Each slide moves the token of the transfer point to the end of the track, which replenishing the transfer point with another token. This fills all the positions, as these are the final elements in each tower; see Figure~\ref{f:solution}(ghi).
\end{proof}

\begin{figure}[h]
\includegraphics[width=\textwidth]{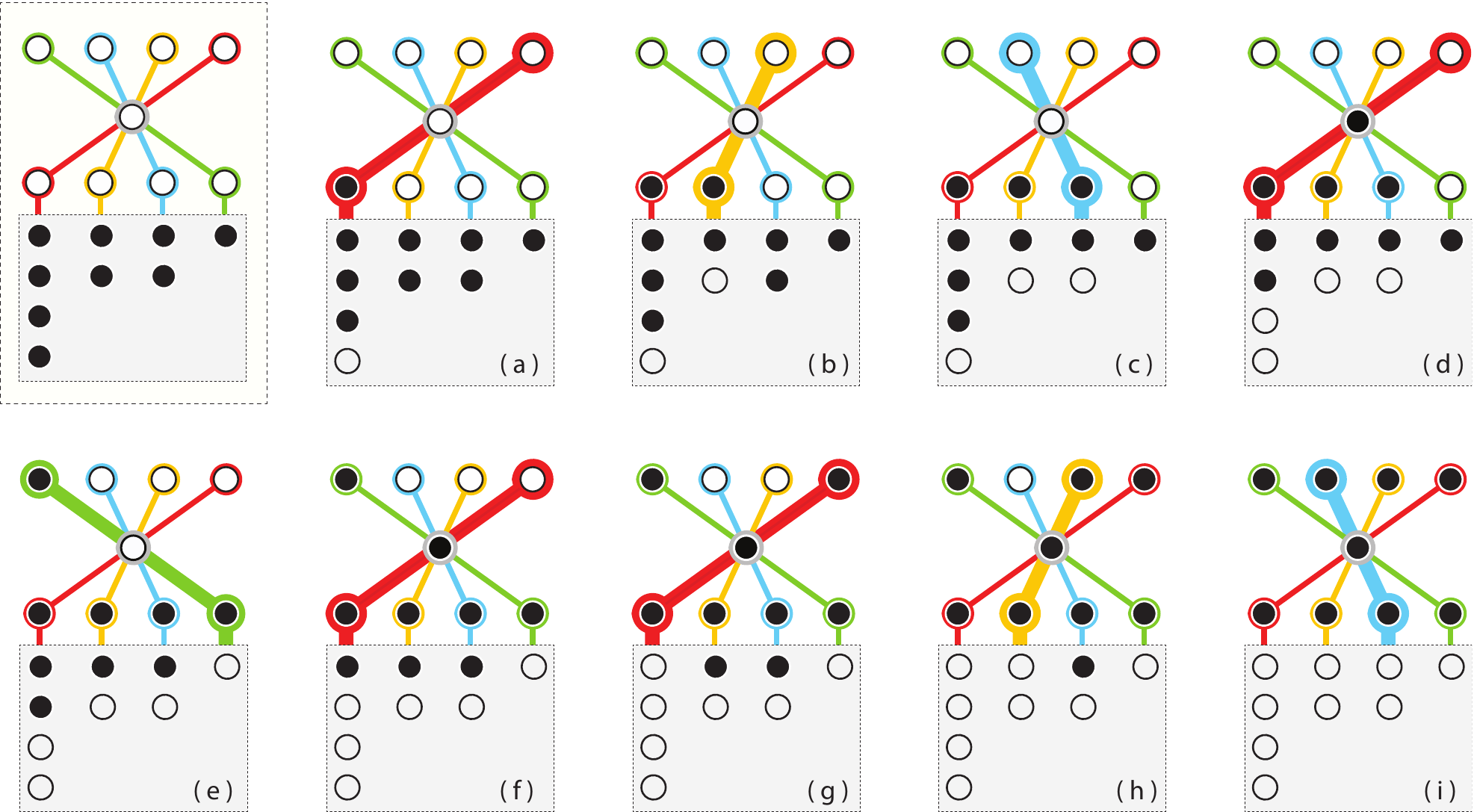}
\caption{Realizing the partition through slides.}
\label{f:solution}
\end{figure}

\begin{rem}
Theorem~\ref{t:party} shows that the $n$-cube can be unfolded into extremes: a long, thin $2 \times \dots \times 2 \times (n+2)$ box and a cubelike $3 \times \dots \times 3 \times 4$ box, with a spectrum of sizes in between.  It would be interesting to explore the \emph{distribution} of cube partitions over all possible unfoldings of the $n$-cube.
\end{rem}

\begin{rem}
Up to symmetry, there are 11 nets of the $3$-cube and 261 nets of the $4$-cube.  For a general $n$-cube, it is an open problem to enumerate its distinct nets.   The theorem above provides a (very weak) lower bound to this problem.
\end{rem}

%
%
\section{Spanning Paths and Cycles} \label{s:path}
\subsection{}

This section is devoted to exploring path unfoldings of cubes. Reconsider the four highlighted path nets from Figure~\ref{f:3dnets}, redrawn below in Figure~\ref{f:4paths}.
The first three cases (abc) are obtained by deleting an edge of a spanning cycle.\footnote{These spanning paths and cycles are sometimes called \emph{Hamiltonian} paths and cycles.} In particular, the middle two (bc) arise from the same cycle, differentiated only by the choice of deleted edge.  In contrast, the last path (d) does not arise from a spanning cycle, with endpoints on antipodal positions of the Roberts graph.

\begin{figure}[h]
\includegraphics[width=\textwidth]{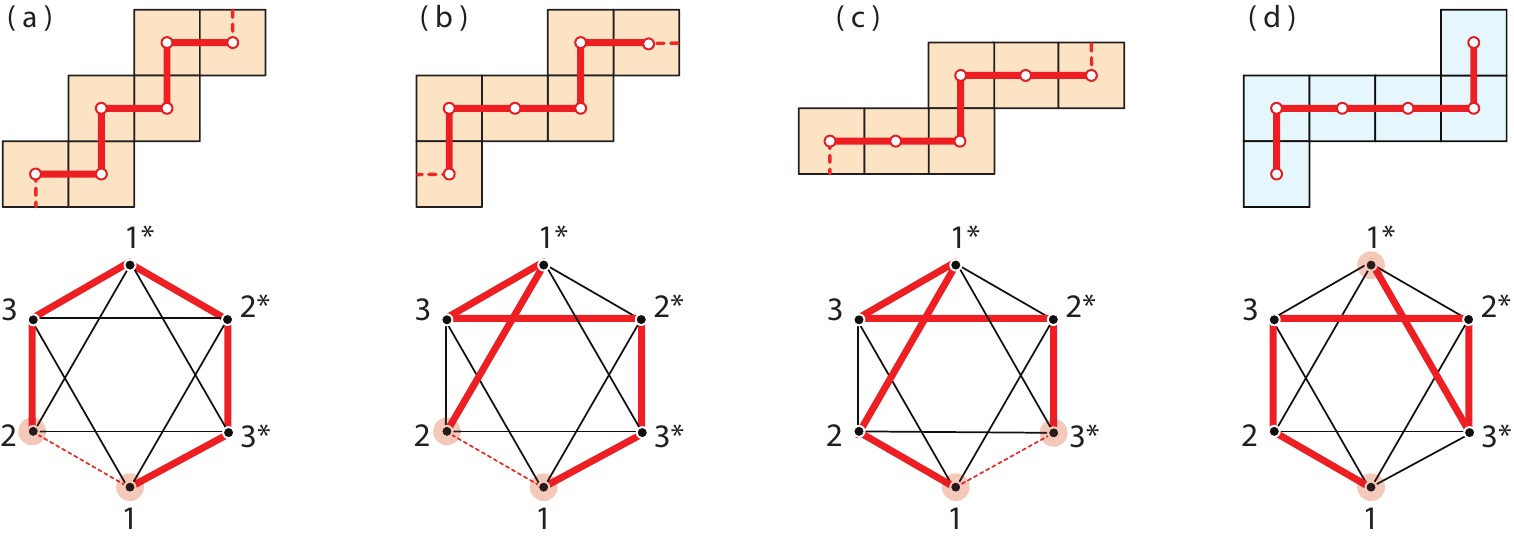}
\caption{Path nets for a 3-cube.}
\label{f:4paths}
\end{figure}

\begin{defn}
Path nets of the cube fall into two categories: \ter-paths that \emph{terminate} at antipodal positions of the Roberts graph, and \ext-paths that can \emph{extend} into spanning cycles by the addition of one edge.
\end{defn}


Our interest in studying \ext-paths is motivated by \emph{tract housing} from real estate development; see Cuff \cite{cuff} for historical context. Developers buy a large tract of land, subdivide it into individual small lots, and construct (near identical) homes on each lot based on a handful of predetermined variations, such as footprint, roof form, and material choices.  We (liberally) reinterpret spanning cycles on cubes as prefabricated structures that are shipped to tracts and unfolded on site into variations of homes.  Depending on the need, one can fabricate spanning cycles where the choice of deleted edge will yield different unfolded footprints, as in Figure~\ref{f:4paths}(bc), or the same footprint to compensate for contractor error, as in Figure~\ref{f:4paths}(a). As the dimension of the cube increases, there is a growth in the spectrum of available fabrication choices, as shown by  Table~\ref{t:combinatorics}.  For instance, when $n=4$, there are 7 distinct spanning cycles which result in 20 distinct  \ext-path nets.  Moreover, for $n>4$, Proposition~\ref{p:maxnet} claims the existence of a single spanning cycle on the $n$-cube which unfolds to $2n$ distinct (tract house) nets, depending on deleted edge.

Imagining the shipping of these prefabricated structures also brings up questions of rigidity.  Consider manufacturing an $n$-cube where all of its ridges are removed except for $2n$ of them, the edges of a spanning cycle in its Roberts graph.  Attach the $2n$ facets (along these $2n$ ridges) as \emph{hinges}, so their dihedral angles are not determined. We claim that such a cube remains  \emph{infinitesimally rigid}: there is no non-trivial first-order deformation that is an infinitesimal congruence on each facet \cite{gfa}. Thus, the prefabricated structure could be easily shipped to its tract housing location in a folded state.  With one additional cut of any ridge, however, the cube fully unfolds into an \ext-path net.  

\begin{conj}
All spanning cycles of the $n$-Roberts graph, corresponding to hinged spanning cycles of facets on the $n$-cube, produce an infinitesimally rigid structure.
\end{conj}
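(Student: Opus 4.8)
\medskip
\noindent\textbf{A strategy toward the conjecture.}
Fix $n\ge 3$; for $n=2$ the only spanning cycle gives a flexible four-bar linkage, so the statement is implicitly about $n\ge 3$. The first step is to translate infinitesimal rigidity of the hinged cycle of facets into a linear-independence statement. An infinitesimal motion assigns to the $m$-th facet an infinitesimal isometry $\xi_m$ of $\R^n$, an element of the $\binom{n+1}{2}$-dimensional space $W$ of such isometries. The hinge along the ridge $R_m=F_m\cap F_{m+1}$ requires $\xi_m$ and $\xi_{m+1}$ to agree on the $(n-2)$-flat $R_m$, and the isometries vanishing on a fixed $(n-2)$-flat form a $1$-dimensional subspace $\R h_m$ spanned by the infinitesimal rotation $h_m$ about that flat. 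Thus an infinitesimal motion is exactly a choice of scalars $t_1,\dots,t_{2n}$ with $\xi_{m+1}-\xi_m=t_m h_m$; traversing the cycle once gives $\sum_{m=1}^{2n} t_m h_m=0$, and conversely any such relation integrates (set $\xi_1=0$, $\xi_{m+1}=\xi_m+t_mh_m$) to a motion, which is a global congruence precisely when all $t_m$ vanish. Hence the conjecture is equivalent to the assertion that \emph{the $2n$ hinge screws $h_1,\dots,h_{2n}$ are linearly independent in $W$} --- and $2n\le\dim W$ exactly when $n\ge 3$, as it must be.

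\medskip
The second step makes the screws explicit. Realize the cube as $[-1,1]^n$, so its facets are $F_u=\{x:\langle x,u\rangle=1\}$ for $u\in\{\pm e_1,\dots,\pm e_n\}$ and a spanning cycle of the Roberts graph is a cyclic list $u_1,\dots,u_{2n}$ of all these vectors with $u_m\neq -u_{m+1}$. Since $u_m+u_{m+1}$ lies on $R_m$, the rotation about $R_m$ is $h_m(x)=(u_m\wedge u_{m+1})\,x+(u_{m+1}-u_m)$, i.e.\ $h_m=(u_m\wedge u_{m+1},\,u_{m+1}-u_m)$ under the splitting $W=\wedge^2\R^n\oplus\R^n$ into angular and translational parts. (A clarifying reformulation: under the linear identification $W\cong\wedge^2\R^{n+1}$ one gets $h_m=\pm\,\widehat u_m\wedge\widehat u_{m+1}$ with $\widehat u:=u+e_{n+1}$, the vertices of a cross-polytope in the hyperplane $x_{n+1}=1$; these are linearly as generic as possible except that any ``conjugate pair'' $\widehat u,\widehat{-u}$ spans a line through $e_{n+1}$, and consecutive $\widehat u_m,\widehat u_{m+1}$ are never conjugate. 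So the conjecture becomes: the edge bivectors of a Hamiltonian cycle on a cross-polytope are linearly independent.)

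\medskip
The third step extracts the content of a putative relation $\sum_m t_m h_m=0$ coordinate by coordinate. Its $\R^n$-part is $\sum_m t_m(u_{m+1}-u_m)=0$, equivalently $\sum_m\delta_m u_m=0$ with $\delta_m:=t_{m-1}-t_m$; since the $u_m$ exhaust $\{\pm e_j\}$ this forces, for each coordinate $j$, the sequence $\delta$ to take equal values at the two cyclic positions where coordinate $j$ occurs. Its $\wedge^2\R^n$-part yields, for each pair $\{j,k\}$, the relation $\sum_{m\in\mathcal E_{jk}}\pm t_m=0$, where $\mathcal E_{jk}$ is the set of cycle edges between a $(\pm e_j)$-facet and a $(\pm e_k)$-facet; these $\binom n2$ sets partition the $2n$ edges. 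Whenever $|\mathcal E_{jk}|=1$ the corresponding $t_m$ vanishes outright, and the remaining relations propagate such vanishings. The claim --- which one verifies by hand in small cases, e.g.\ for $n=3$ (a $6\times6$ determinant) or for the standard cycle $e_1,\dots,e_n,-e_1,\dots,-e_n$ in every dimension, where the system collapses to $t_1=\dots=t_{2n}=0$ --- is that the ``$\delta$-equal-on-conjugate-pairs'' equations together with the $\mathcal E_{jk}$-relations admit only the zero solution.

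\medskip
The hard part will be establishing that last collapse \emph{uniformly over all spanning cycles} --- equivalently, over all closed walks in $K_n$ visiting each vertex exactly twice. I would attempt an induction on $n$: deleting the facets $F_{e_n}$ and $F_{-e_n}$ breaks the cycle into at most two arcs which, after a short case analysis to repair a possible antipodality obstruction at the re-routing, splice into a spanning cycle of the $(n-1)$-Roberts graph, and a screw dependence for the $n$-cycle ought to descend to one for the $(n-1)$-cycle. A more robust but less structural alternative is to note that a \emph{generic} cycle of $2n$ hinges in $\R^n$ is infinitesimally rigid for $n\ge 3$ (generic flat-screws being independent, since $2n\le\binom{n+1}{2}$), so independence can fail only on a proper subvariety, and then to show the cube configuration lies off it --- for instance by exhibiting, for each cycle, an explicit $2n\times 2n$ minor of the screw matrix whose value is a visibly nonzero polynomial in the $\pm1$ entries. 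Either way the central difficulty is controlling how the $\binom n2$ edge-relations interlock along an arbitrary Hamiltonian cycle; the generic count, the $n=3$ and $n=4$ enumerations, and the standard-cycle computation make the conclusion compelling.
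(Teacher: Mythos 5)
The statement you are addressing is labeled a \emph{conjecture} in the paper, and the authors supply no proof of it; so there is no argument of theirs to compare yours against. Your write-up is honest about its own status, and indeed it is not a proof: it is a (correct, and quite clean) \emph{reduction} of the conjecture to a concrete linear-algebra claim, followed by two sketched strategies for that claim, neither of which is carried out. The reduction itself checks out: an infinitesimal isometry of $\R^n$ vanishing on an $(n-1)$-dimensional facet is zero, so assigning $\xi_m\in W$ to each facet is the right model; the isometries vanishing on an $(n-2)$-flat do form a line spanned by the rotation about that flat; the cycle condition $\sum_m t_m h_m=0$ and the equivalence ``infinitesimally rigid iff the $2n$ hinge screws are independent'' are standard and correctly derived; and your explicit formula $h_m=(u_m\wedge u_{m+1},\,u_{m+1}-u_m)$ is right (one verifies $(u_m\wedge u_{m+1})(u_m+u_{m+1})=u_m-u_{m+1}$, so the translational part is $u_{m+1}-u_m$ as you state). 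The coordinate-by-coordinate extraction of the relation (the $\delta$-conditions from the translational part and the $\binom{n}{2}$ edge-set relations from the angular part) is also correct.

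The genuine gap is exactly where you flag it: nothing in the proposal establishes that these equations force $t_1=\dots=t_{2n}=0$ \emph{for every} spanning cycle. The genericity remark is not evidence --- the screw matrix here has all entries in $\{0,\pm1\}$ and is as non-generic as possible, so knowing that independence holds off a proper subvariety says nothing until you show this configuration avoids it. The inductive sketch has an unaddressed obstruction beyond the antipodality repair you mention: after deleting $F_{e_n}$ and $F_{-e_n}$ and splicing, the hinge screws of the new $(n-1)$-cycle are not simply a subset of the old ones (the spliced edges introduce new bivectors $u\wedge u'$ that were not present), so a dependence does not obviously ``descend.'' And the small cases ($n=3$, $n=4$, the standard cycle) are consistent with the conjecture but do not control how the $\binom{n}{2}$ edge-relations interlock along an arbitrary Hamiltonian cycle, which is the whole difficulty. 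In short: your reformulation --- \emph{the edge bivectors $\widehat u_m\wedge\widehat u_{m+1}$ of a Hamiltonian cycle on the cross-polytope $\{\pm e_j+e_{n+1}\}$ are linearly independent in $\wedge^2\R^{n+1}$} --- is a genuinely useful restatement of the open problem, but the problem remains open at the end of your proposal.
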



\subsection{}

Although the Roberts graph has been helpful in understanding the geometry of unfoldings through rolls, the language of chord diagrams, from the theory of Vassiliev knot and link invariants  \cite{bar}, is better suited to frame the study of spanning cycles.

\begin{defn}
A \emph{chord diagram} is a  matching of the nodes of a $2n$-gon. A chord is called a \emph{loop} if it pairs adjacent nodes of the polygon.  A \emph{k-loop} chord diagram has exactly $k$ loops, and is \emph{loopless} if it has none. Figure~\ref{f:chords} shows the set of seven unique loopless chord diagrams (up to symmetry)  for an 8-gon.
\end{defn}

\begin{figure} [h]
\includegraphics[width=\textwidth]{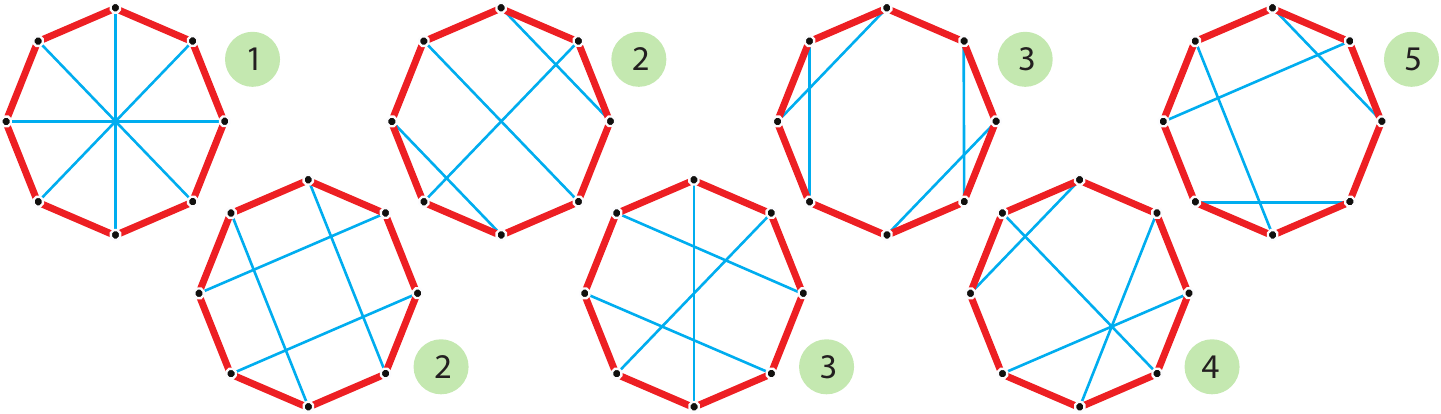}
\caption{Chord diagrams for eight nodes, along with the number of distinct \ext-path unfoldings associated to its spanning cycle.}
\label{f:chords}
\end{figure}

\begin{lem} \label{l:cycles}
There is a bijection between spanning cycles on the $n$-Roberts graph and loopless chord diagrams of a $2n$-gon.
\end{lem}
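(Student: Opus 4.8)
The plan is to set up an explicit correspondence and then check it is well-defined and invertible in both directions. Recall the vertices of the $n$-Roberts graph are the $2n$ facets of the $n$-cube, arranged around a circle so that antipodal vertices are opposite facets, and the only non-edges of the Roberts graph are the $n$ ``long diagonals'' joining antipodal vertices. A spanning cycle visits all $2n$ vertices, using $2n$ edges, none of which is a long diagonal. Given such a cycle $C$, I would define a chord diagram on the $2n$-gon (whose nodes are identified with the $2n$ vertices in their circular arrangement) by pairing each vertex $v$ with the vertex that is \emph{antipodal} to its cycle-successor --- equivalently, I will look for a more natural pairing: since $C$ has $2n$ edges and the polygon has $2n$ nodes, the right object to match is \emph{edges of $C$ against something}. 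The cleaner formulation: two consecutive edges of $C$ at a vertex $v$ determine, together with the long diagonal at $v$, a local picture; I would instead pair up the $2n$ vertices into $n$ chords by the rule ``$v$ is chorded to $w$ iff $v$ and $w$ are the two neighbors of a common antipodal pair along $C$,'' and then argue this is a perfect matching with no loop (no chord joining adjacent polygon nodes).

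Concretely, here is the map I expect to work. Fix an antipodal pair $\{a, a^*\}$. In the cycle $C$, vertex $a$ has two cycle-neighbors and $a^*$ has two cycle-neighbors; that is four vertices (they are distinct because $C$ is a cycle on $2n \ge 6$ vertices and $a \ne a^*$ is not a $C$-edge). Removing $a$ and $a^*$ from $C$ splits $C$ into two arcs; the four neighbor-vertices are the four endpoints of these two arcs. I pair the two endpoints of each arc? No --- that gives a matching on $2n$ vertices only if I do this for every antipodal pair consistently, which overcounts. So the correct and economical description is: delete all $n$ long-diagonal pairs is not a sub-operation; instead, I will use the standard trick. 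Orient $C$. For each vertex $v$, let $\sigma(v)$ be its successor along $C$. Define the chord diagram by the involution $v \mapsto \sigma^{-1}(v^*)$, i.e. chord $v$ to the $C$-predecessor of the antipode of $v$. I would verify (i) this is an involution: if $w = \sigma^{-1}(v^*)$ then $\sigma(w) = v^*$, so $\sigma(w)^* = v$, so $w$'s partner is $\sigma^{-1}(w^*) = \sigma^{-1}(\sigma(w)) $ --- wait, that is not obviously $v$; so orientation-dependence must be handled by symmetrizing, pairing $v$ with $w$ when $\{\sigma(v),\sigma^{-1}(v)\} \cap \{w^*\}$... The point of the proof-plan is that \emph{some} such local rule works; I would pin it down by checking it on the $n=3$ and $n=4$ cases against Figures~\ref{f:4paths} and~\ref{f:chords} and the count in Table~\ref{t:combinatorics}, then prove it in general.

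Granting the map $C \mapsto D_C$ is well-defined, the key steps are: (1) show $D_C$ is a perfect matching (every vertex appears in exactly one chord) --- this follows because $\sigma$ is a bijection and antipody is a fixed-point-free involution, so their relevant composite is again a fixed-point-free involution on the $2n$ nodes; (2) show $D_C$ is loopless, i.e. no chord joins two cyclically adjacent nodes of the $2n$-gon: a loop would force the long diagonal at some vertex to coincide with a $C$-edge, which is impossible since long diagonals are exactly the non-edges of the Roberts graph --- here I must use that the circular arrangement of the polygon nodes is the \emph{same} arrangement in which antipodal $=$ diametrically opposite, so ``adjacent in the $2n$-gon'' really is the obstruction I want; (3) construct the inverse: given a loopless chord diagram $D$, reconstruct a cycle $C$ by reversing the local rule, and check $C$ uses only Roberts-graph edges (looplessness of $D$ is exactly what guarantees no reconstructed edge is a long diagonal) and that $C$ is connected (a single $2n$-cycle rather than a union of shorter cycles); (4) check the two constructions are mutually inverse, which will be a short computation once the rule is fixed.

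I expect the main obstacle to be step (3), specifically the \textbf{connectivity} clause: the reconstruction rule naturally produces a $2$-regular graph on $2n$ vertices (a disjoint union of cycles), and I must show looplessness of the chord diagram forces it to be a \emph{single} cycle. The likely mechanism is that a loopless chord diagram is ``irreducible'' in a way that prevents the reconstructed permutation from decomposing --- perhaps by showing that a proper sub-cycle would create a chord between nodes that are adjacent in the $2n$-gon after restriction, or by a parity/interlacing argument on the chord diagram (chords must pairwise interlace enough to keep everything in one orbit). A secondary, more bookkeeping obstacle is nailing down the orientation-independence of the rule so that unoriented spanning cycles correspond to (unoriented) chord diagrams; I would handle this by checking that reversing the orientation of $C$ yields the same matching $D_C$. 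Once connectivity and orientation-independence are dispatched, looplessness $\leftrightarrow$ ``no $C$-edge is a long diagonal'' is immediate, and the bijection follows; I would close by confirming the resulting count matches the seven loopless diagrams on the $8$-gon of Figure~\ref{f:chords} against the seven spanning cycles of the $4$-cube mentioned in the text.
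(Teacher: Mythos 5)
There is a genuine gap: you never actually produce the bijection. Your plan fixes the $2n$-gon to be the Roberts graph's own circular layout (antipodal $=$ diametrically opposite) and then tries to encode the spanning cycle $C$ in the chords via a local rule such as $v \mapsto \sigma^{-1}(v^*)$. As you observe mid-argument, that map is not an involution --- it is fixed-point-free (since $\sigma(v)=v^*$ would make an antipodal pair an edge of $C$), but the composite of the successor bijection with the antipodal involution is merely a bijection, so your step (1) claim that the "relevant composite is again a fixed-point-free involution" is false --- and you ultimately leave the pairing rule unspecified, deferring to "checking it on small cases." A proof cannot rest on the assertion that \emph{some} such local rule works. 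On top of that, the inverse direction of your setup genuinely does face the connectivity problem you flag (a reconstructed $2$-regular graph could a priori be a disjoint union of short cycles), and you offer only speculation about how to rule that out.

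The paper's construction swaps the roles of boundary and chords, and this dissolves every one of your obstacles at once. Take the spanning cycle $C$ itself as the boundary of the $2n$-gon --- the polygon's cyclic order is the order in which $C$ visits the facets --- and let the $n$ chords be the antipodal pairs. This is automatically a perfect matching because antipody is a fixed-point-free involution on the $2n$ facets; it is loopless precisely because no edge of $C$ joins antipodal facets (those are exactly the non-edges of the Roberts graph); and the inverse is immediate: given a loopless chord diagram, declare chord-partners to be antipodal and read the polygon boundary as the spanning cycle. The boundary of a polygon is a single cycle by construction, so connectivity is free, and looplessness is exactly the condition that every boundary edge is a genuine Roberts-graph edge. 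I recommend rebuilding your argument around that role reversal rather than anchoring the polygon to the Roberts graph's fixed circular arrangement.
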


\begin{proof}
Consider a spanning cycle on the $n$-Roberts graph.  Excluding this $2n$-cycle, remove the remaining edges of the graph and add the $n$ edges connecting antipodal nodes.  Deform the spanning cycle into the boundary of a $2n$-gon, where the added edges naturally become chords of this diagram.  (Each column of Figure~\ref{f:chord-duality} shows an example, with the top row of spanning cycles deforming into the bottom row of loopless chord diagrams.) Since the spanning cycle cannot contain antipodal nodes of the Roberts graphs, only nonadjacent nodes will be pairwise matched.  This operation is invertible, and the result follows.
\end{proof}

\begin{figure}[h]
\includegraphics[width=\textwidth]{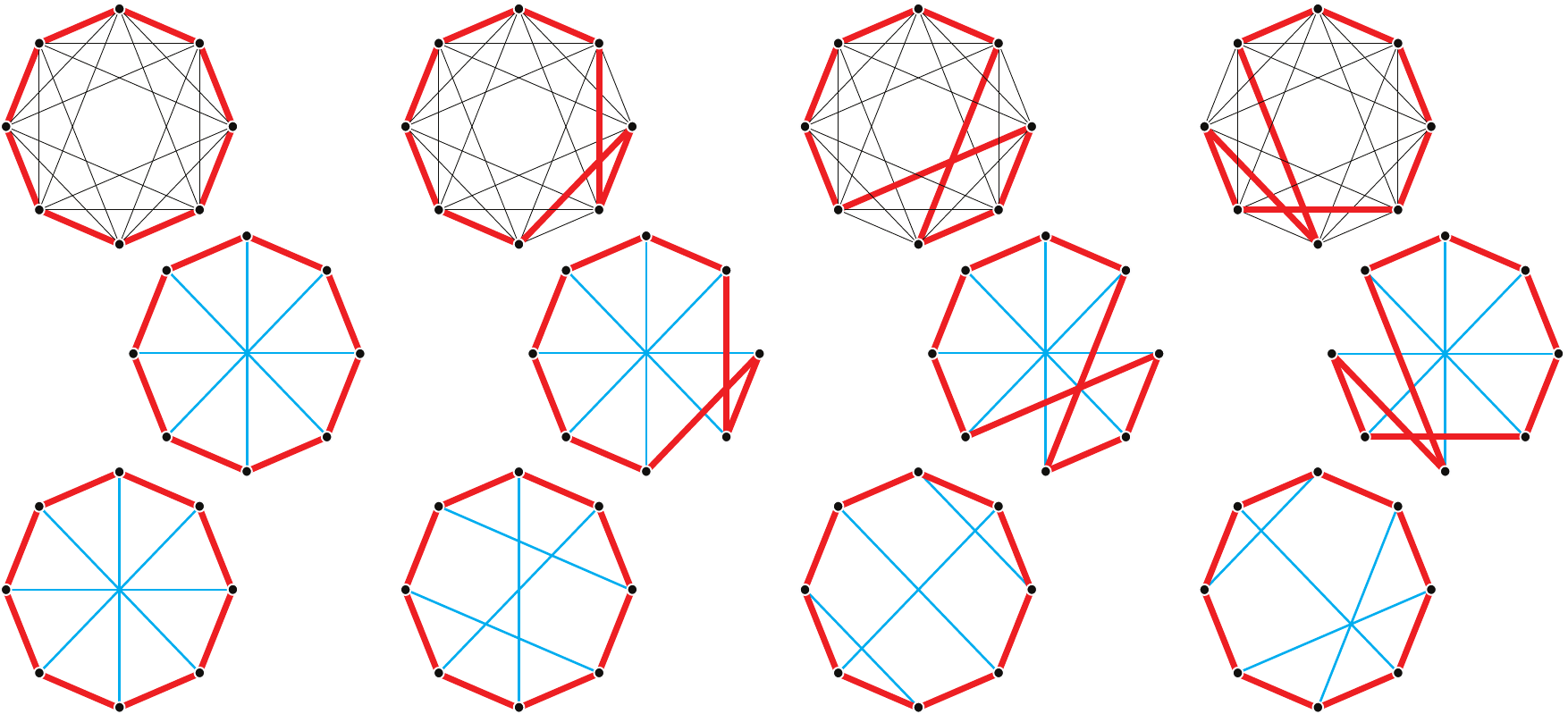}
\caption{Bijection between spanning cycles and loopless chord diagrams.}
\label{f:chord-duality}
\end{figure}

\begin{lem} \label{l:loops}
There is a bijection between \ter-paths on the $n$-Roberts graph and 1-loop chord diagrams of a $2n$-gon.
\end{lem}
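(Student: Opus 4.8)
The plan is to mimic the correspondence of Lemma~\ref{l:cycles}, adapting it to the case where the spanning subgraph of the Roberts graph is a path terminating at antipodal nodes rather than a cycle. Given a \ter-path $P$ on the $n$-Roberts graph, its two endpoints are a pair of antipodal nodes, say $v$ and $v^*$. The idea is to close $P$ into a $2n$-cycle by adding the single edge $\{v, v^*\}$ — even though this edge is \emph{not} present in the Roberts graph, it is exactly the edge we want to appear as a loop in the chord diagram once the cycle is deformed into a polygon. So: take $P$, adjoin the antipodal edge $\{v, v^*\}$ to obtain an abstract $2n$-cycle $C$, deform $C$ into the boundary of a $2n$-gon, and then draw the $n$ antipodal edges of the Roberts graph as chords. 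The edge $\{v, v^*\}$ now joins two adjacent nodes of the polygon — it is a loop — while the other $n-1$ antipodal pairs, which were genuine non-edges of $P$ sitting between non-adjacent cycle-nodes, become honest (non-loop) chords. Hence we land in the set of $1$-loop chord diagrams.

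The steps, in order, would be: (1) verify that a \ter-path does have its endpoints at an antipodal pair — this is essentially the definition of \ter-path, but one should note that a spanning path whose endpoints are \emph{not} antipodal is precisely an \ext-path (it can be completed to a spanning cycle by an actual Roberts-graph edge), so the two categories are genuinely disjoint and exhaustive for spanning paths, matching the dichotomy already stated. (2) Perform the closure-and-deformation described above and check that the resulting chord diagram has exactly one loop: the added edge $\{v,v^*\}$ gives one loop, and no other antipodal pair can be adjacent on $C$, since adjacency on $C$ means connected by an edge of $P$, and $P$ (being a subgraph of the Roberts graph) contains no antipodal edge. (3) Exhibit the inverse map: given a $1$-loop chord diagram on a $2n$-gon, read its boundary $2n$-cycle, delete the unique loop-chord's corresponding boundary edge, and check that the remaining path, together with the identification of the $n$ chords as the antipodal structure, is a spanning \ter-path on the Roberts graph — one must confirm the endpoints of this path are the two nodes joined by the deleted loop, which are an antipodal pair by construction. (4) Check the two maps are mutually inverse and that the construction respects the symmetry (dihedral) group actions, so the bijection descends to equivalence classes if that is the intended reading.

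The main obstacle I expect is a bookkeeping subtlety rather than a conceptual one: making precise that "deleting the loop edge" and "adding the antipodal edge" are genuinely inverse operations on the level of labeled structures, and in particular that a $1$-loop chord diagram cannot secretly arise from something other than a \ter-path — e.g., ruling out that removing the boundary edge under a loop could disconnect the cycle into something that is not a single path, or that the resulting path could fail to be spanning. Since a $2n$-cycle minus one edge is always a Hamiltonian path on the same $2n$ vertices, connectivity and spanning are automatic; the only real content is that the endpoints are antipodal, which follows because the loop-chord, by the chord-diagram-to-Roberts-graph dictionary of Lemma~\ref{l:cycles}, connects an antipodal pair. So the proof should be short, essentially: \emph{apply the bijection of Lemma~\ref{l:cycles} with the single modification of allowing one antipodal edge to lie on the cycle, which forces exactly one loop.} I would also remark that iterating this observation suggests a bijection between "$k$-edge-deficient" path/forest-like structures and $k$-loop diagrams, though that is beyond what the lemma claims.
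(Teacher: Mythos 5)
Your proposal is correct and follows essentially the same route as the paper's (much terser) proof: close the \ter-path with the antipodal edge joining its endpoints to form a $2n$-cycle, deform it into the boundary of a $2n$-gon, and observe that this added antipodal pair becomes the unique loop while the remaining $n-1$ antipodal pairs become genuine chords. Your additional checks (no other antipodal pair can be adjacent on the cycle, and the inverse map recovers a spanning path with antipodal endpoints) are exactly the details the paper leaves implicit.
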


\begin{proof}
A \ter-path terminates at antipodal positions of the cube and connecting these nodes with an edge forms a $2n$-cycle.  The chord diagram associated to this cycle has a unique loop between two adjacent nodes representing the antipodal positions.
\end{proof}

\begin{exmp}
Figure~\ref{f:4chords} shows the chord diagrams for path nets from Figure~\ref{f:4paths}.  The dashed  boundary lines correspond to the deleted edges needed to enable path unfoldings. 
\end{exmp}

\begin{figure}[h]
\includegraphics{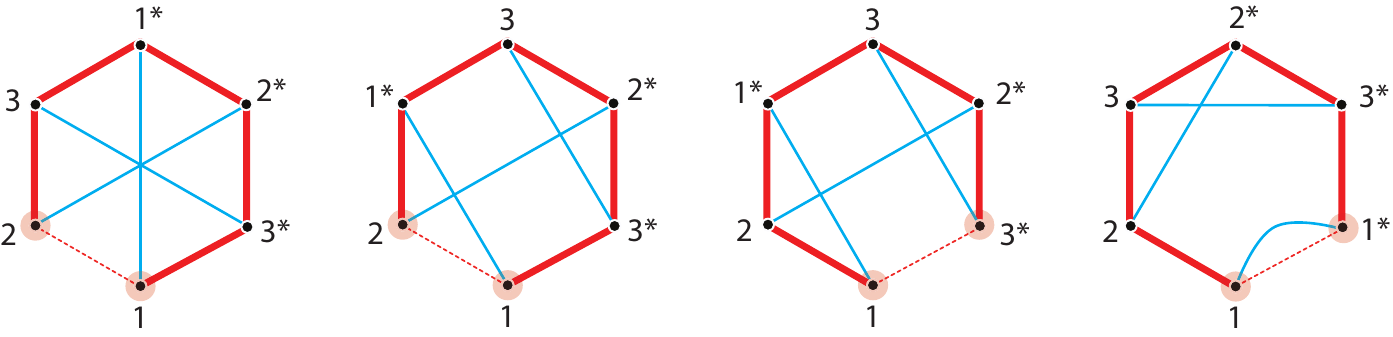}
\caption{Chord diagrams for spanning paths from Figure~\ref{f:4paths}.}
\label{f:4chords}
\end{figure}

\begin{lem}
The number of path unfoldings of an $n$-cube is equal to the number of \ter-paths on the $(n+1)$-Roberts graph.\end{lem}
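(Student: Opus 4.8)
The plan is to set up a bijection between spanning paths of the $n$-Roberts graph and \ter-paths of the $(n+1)$-Roberts graph that is compatible with the symmetry groups of the two cubes. By the correspondence between unfoldings of the $n$-cube and spanning trees of the $n$-Roberts graph (Section~\ref{s:nets}), a path unfolding of the $n$-cube \emph{is} a spanning path of the $n$-Roberts graph, counted up to the symmetries of the $n$-cube (and up to reversing the path, which yields the same net); likewise a \ter-path of the $(n+1)$-Roberts graph is counted up to the symmetries of the $(n+1)$-cube. So the lemma reduces to matching these two families of objects.

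Realize the $(n+1)$-Roberts graph as the $n$-Roberts graph together with one additional antipodal pair of nodes $\{w, w^*\}$, each joined to every node of the $n$-Roberts graph but not to one another; equivalently, the $n$-Roberts graph is the subgraph induced on the complement of the part $\{w, w^*\}$. Send a spanning path $P = v_1 v_2 \cdots v_{2n}$ of the $n$-Roberts graph to $\Phi(P) = w\, v_1 v_2 \cdots v_{2n}\, w^*$. The only adjacencies this requires beyond those already present in $P$ are $w v_1$ and $v_{2n} w^*$, and both hold since $w$ and $w^*$ are adjacent to every node of the $n$-Roberts graph; moreover $\Phi(P)$ visits all $2n+2$ nodes exactly once and its two endpoints $w, w^*$ are antipodal. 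Hence $\Phi(P)$ is a \ter-path of the $(n+1)$-Roberts graph.

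For the inverse, start from a \ter-path $Q$ of the $(n+1)$-Roberts graph; its endpoints form some antipodal pair of facets of the $(n+1)$-cube. The symmetry group of the $(n+1)$-cube permutes its $n+1$ pairs of opposite facets transitively, so there is a symmetry $\sigma$ carrying $Q$ to a \ter-path with endpoints $\{w, w^*\}$, which after possibly reversing it reads $w\, u_1 \cdots u_{2n}\, w^*$; deleting $w$ and $w^*$ leaves a spanning path $\Psi(Q) := u_1 \cdots u_{2n}$ of the induced $n$-Roberts graph. By construction $\Psi(\Phi(P))$ may be taken to be $P$ itself (use $\sigma = \mathrm{id}$), and $\Phi(\Psi(Q))$ equals $\sigma(Q)$ up to reversal, hence is equivalent to $Q$; so $\Phi$ and $\Psi$ will be mutually inverse bijections once we verify they are well defined on equivalence classes.

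That compatibility check is the crux of the argument. For $\Phi$: a symmetry of the $n$-cube, or a reversal, carries $P$ to an equivalent spanning path, and it extends — by fixing $w$ and $w^*$, respectively by swapping them — to a symmetry or reversal carrying $\Phi(P)$ to an equivalent \ter-path. For $\Psi$: the only freedom in the construction was the choice of $\sigma$ and of which endpoint was labeled $w$, and any two admissible $\sigma$ differ by an element of the stabilizer of the set $\{w, w^*\}$ inside the symmetry group of the $(n+1)$-cube. The key point is that this stabilizer is the symmetry group of the $n$-cube extended by the single reflection interchanging $w$ and $w^*$, which acts on a \ter-path with endpoints $\{w, w^*\}$ exactly as a reversal; hence any two admissible choices change $\Psi(Q)$ only by a symmetry of the $n$-cube and/or a path reversal, and $\Psi$ descends to classes. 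With both maps well defined on equivalence classes and mutually inverse, the two counts coincide. I expect the only genuinely nontrivial step to be this stabilizer identification together with the verification that $\Psi$ is insensitive to the intermediate choices; everything else is immediate. One may also route the argument through chord diagrams: by Lemma~\ref{l:loops} the \ter-paths of the $(n+1)$-Roberts graph are the $1$-loop chord diagrams of a $(2n+2)$-gon, and deleting the two loop-vertices while recording the contraction as a marked boundary edge of the resulting $2n$-gon matches these with the loopless chord diagrams of a marked $2n$-gon together with the $1$-loop chord diagrams of a $2n$-gon — that is, with the \ext-paths and \ter-paths of the $n$-Roberts graph, hence with all path unfoldings of the $n$-cube.
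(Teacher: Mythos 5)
Your proof is correct, and the bijection you construct is in fact the same one the paper uses, just expressed in a different language. The paper closes the spanning path of the $n$-Roberts graph with a marked edge, passes to the chord diagram of the resulting $2n$-gon, and inserts two new vertices joined by a chord into the interior of the marked edge, producing a $1$-loop chord diagram of a $(2n+2)$-gon; unwinding Lemma~\ref{l:loops}, that insertion is precisely your map $\Phi$, which splices the new antipodal pair $\{w, w^*\}$ onto the two ends of the path. Where you genuinely differ is in the bookkeeping: you work directly on the Roberts graphs and make explicit a point the paper leaves implicit, namely that both sides of the equality are counted up to cube symmetry (and path reversal), so the bijection must be shown to descend to equivalence classes. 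Your identification of the setwise stabilizer of $\{w, w^*\}$ in the symmetry group of the $(n+1)$-cube as the symmetry group of the $n$-cube extended by the swap of $w$ and $w^*$ --- with that swap acting on a \ter-path anchored at $\{w,w^*\}$ as reversal of the extracted spanning path --- is exactly the right verification, and it is the one nontrivial step; the paper instead absorbs this quotient into the convention that chord diagrams are taken up to symmetry of the polygon. Your closing remark, deleting the loop vertices of the $(2n+2)$-gon and recording the contraction as a marked boundary edge, is the paper's argument read backwards, so the two routes buy the same theorem; yours is more self-contained and rigorous about the counting, while the paper's is shorter because it leans on the chord-diagram formalism already set up for Lemmas~\ref{l:cycles} and~\ref{l:loops}.
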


\begin{proof}
Consider path unfoldings of an $n$-cube, viewed as spanning paths on the $n$-Roberts graph.  Add a \emph{marked edge} connecting the ends of these spanning paths to form cycles, reinterpreted as chord diagram of $2n$-gons; Figure~\ref{f:4chords} shows the $3$-cube version.  The loopless diagrams come from \ext-paths and the 1-loop diagrams from \ter-paths. Now add two vertices to the interior of the marked edge, connecting them with a chord;  Figure~\ref{f:chords-insert} shows this operation for the diagrams of Figure~\ref{f:4chords}.
\begin{figure} [h]
\includegraphics{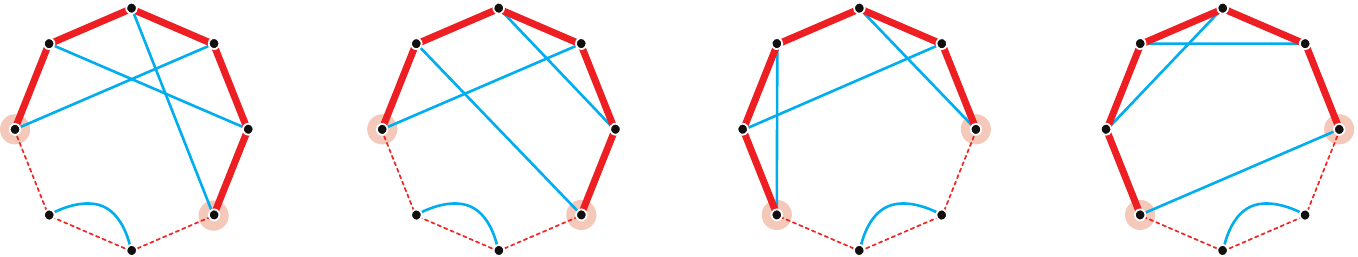}
\caption{The \ter-paths for the 4-cube.}
\label{f:chords-insert}
\end{figure}
The resulting collection is exactly the set of 1-loop chord diagrams of a $2(n+1)$-gon.  In particular, the marked edge of a $2n$ chord diagram coming from a \ter-path must be parallel to its loop edge, and inserting a new loop along this marked edge alters the previous loop to a regular chord.  Lemma~\ref{l:loops} finishes the bijection.
\end{proof}

\subsection{}

We close with some enumerative results.  The number of spanning cycles on the $n$-Roberts graph was originally studied by Singmaster \cite{sing} in 1975.  An explicit generating function is provided by Krasko and Omelchenko \cite{krom}  in 2017 by counting the loopless chord diagrams of a $2n$-gon, appearing as entry A003437 in the Online Encyclopedia of Integer Sequences (OEIS).  A generating function for the number of spanning paths on the $n$-Roberts graph (path unfoldings of the $n$-cube) is constructed by Burns \cite{bur} in 2016, appearing as entry A271215 in the OEIS.  Table~\ref{t:combinatorics} displays some of these values for spanning cycles and paths, the latter decomposing into \ter-paths and \ext-paths.

\renewcommand{\arraystretch}{1.5}{
\begin{table}[h]
\begin{center}
\begin{tabular}{lllll}
dimension \hspace{.2 in}  &  spanning cycles \hspace{.2 in} & path unfoldings \hspace{.2 in} & \ter-paths \hspace{.2 in} & \ext-paths \\
\hline
2 & 1 & 1 & 0 & 1 \\
3 & 2 & 4 & 1 & 3 \\
4 & 7 & 24 & 4 & 20 \\
5 & 29 & 184 & 24 & 160 \\
6 & 196 & 1911 & 184 & 1727 \\
7 & 1788 & 24252 & 1911 & 22341 \\
$n$ & A003437 & A271215 = $p_n$ & $p_{n-1}$ & $p_n - p_{n-1}$ \\
\end{tabular}
\end{center} \vspace{.1in}
\caption{Classification and enumeration of unfoldings of cubes.}
\label{t:combinatorics}
\end{table}
}

\begin{exmp}
For the $4$-cube, Table~\ref{t:combinatorics} shows 7 spanning cycles that unfold into 20 distinct  \ext-path nets.  Figure~\ref{f:chords} showcases these cycles as chord diagrams, where the displayed number counts the set of distinct \ext-path nets associated to its diagram.  Framed differently, it counts the unique number of tract homes that can be unfolded from a particular prefabricated cycle.  The sum of these numbers is 20, as expected.  
\end{exmp}

Given a spanning cycle, the number of distinct unfolded \ext-path nets depends on the symmetry of its chord diagram. Namely, a chord diagram that unfolds an $n$-cube into a unique net has maximal symmetry, with each deleted edge resulting in the same \ext-path unfolding; see Figure~\ref{f:chord-pattern}(a).  An unfolding into $2n$ distinct nets has no symmetry, resulting in a distinct \ext-path unfolding for each deleted edge; see Figure~\ref{f:chord-pattern}(d).  Enumeration based on arbitrary symmetries becomes difficult, but accessible for special cases:

\begin{prop} 
\label{p:maxnet}
Each $n$-cube (larger than dimension four) has four distinct spanning cycles, each of which unfolds into exactly 1, $\lceil n/2 \rceil$, $n$, or the maximum $2n$ distinct nets, respectively.
\end{prop}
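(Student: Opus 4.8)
The plan is to move everything onto loopless chord diagrams and reduce to an orbit count. By Lemma~\ref{l:cycles} a spanning cycle $\gamma$ of the $n$-Roberts graph is a loopless chord diagram $C$ on a $2n$-gon, and each of its \ext-path nets comes from deleting one of the $2n$ boundary edges. The first step is to make precise the counting principle used informally before the statement: two deleted edges $e,e'$ yield the same \ext-path net if and only if some symmetry of the $2n$-gon that preserves the matching $C$ carries $e$ to $e'$. Such a symmetry certainly identifies the two developments; conversely, a cube isometry identifying the two nets must carry the spanning path $\gamma\setminus e$ onto $\gamma\setminus e'$, and since the Roberts graph contains a unique edge between the two non-antipodal endpoints of such a path, it must fix $\gamma$ and send $e$ to $e'$, while cube symmetries fixing $\gamma$ are precisely the symmetries of the $2n$-gon preserving $C$. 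Hence the number of distinct \ext-path nets of $C$ is the number of orbits of the $2n$ boundary edges under $\mathrm{Aut}(C)\le D_{2n}$.

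The second step is to pick four subgroups of $D_{2n}$ with the desired orbit counts. The full group $D_{2n}$ is edge-transitive, giving $1$ orbit; the trivial group gives $2n$. Letting $\sigma$ be the reflection through two antipodal vertices of the $2n$-gon, the group $\langle\sigma\rangle$ fixes no boundary edge and so has $n$ orbits of size two. For the Klein four-group $G=\{1,\rho^n,\sigma,\rho^n\sigma\}$, with $\rho$ the one-step rotation, Burnside's lemma gives $\tfrac14(2n+0+0+f)$ orbits, where $f$ is the number of boundary edges fixed by the reflection $\rho^n\sigma$; this reflection passes through two antipodal vertices when $n$ is even ($f=0$) and through two antipodal edge-midpoints when $n$ is odd ($f=2$), so the count equals $\lceil n/2\rceil$ in both cases.

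The heart of the argument is the third step: constructing, for each $n>4$, four loopless chord diagrams whose automorphism groups are \emph{exactly} these four groups. Let $C_1$ be the diagram of the $n$ diameters $\{i,i+n\}$; it is loopless and plainly $D_{2n}$-symmetric. For the other three, keep every chord a diameter except for a small re-pairing of a handful of vertices. For $C_2$, re-pair $1,n-1,n+1,2n-1$ as $\{1,2n-1\},\{n-1,n+1\}$ instead of the diameters $\{1,n+1\},\{n-1,2n-1\}$: the only non-diameter chords are these two, both of ``length'' two, so any symmetry must permute them and therefore preserve the antipodal vertex pair $\{0,n\}$ at which they are centered; the stabilizer of that pair in $D_{2n}$ is exactly $G$, and one checks $G$ preserves $C_2$, so $\mathrm{Aut}(C_2)=G$. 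For $C_3$, re-pair a few vertices near $0$ and near $n$ so that the diagram is $\sigma$-symmetric and contains a \emph{unique} length-two chord, centered at one of the fixed vertices of $\sigma$; then any symmetry fixes that vertex, forcing it into $\{1,\sigma\}$, so $\mathrm{Aut}(C_3)=\langle\sigma\rangle$. For $C_4$, re-pair so that the diagram contains a unique length-two chord \emph{and} a unique length-three chord, all other chords being diameters: the length-two chord pins any symmetry to a vertex-reflection or the identity, and no vertex-reflection can fix a length-three chord (its perpendicular bisector runs through edge-midpoints, not vertices), so $\mathrm{Aut}(C_4)=\{1\}$. In each case one verifies routinely that the diagram is loopless and yields a genuine spanning cycle, and that $n>4$ makes the re-paired vertices pairwise distinct and keeps the new short chords distinct from the length-$n$ diameters. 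Applying the orbit count of the first step to $C_1,\dots,C_4$ then produces spanning cycles unfolding into $1$, $\lceil n/2\rceil$, $n$, and $2n$ distinct \ext-path nets, and since $1<\lceil n/2\rceil<n<2n$ for $n>4$ these cycles are distinct.

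The step I expect to be the true obstacle is proving the diagrams $C_2,C_3,C_4$ admit \emph{no} symmetry beyond the intended one, uniformly in $n$ — with the trivial-automorphism diagram $C_4$ the most delicate. The uniqueness-of-short-length device used above is what makes this manageable: forcing all non-diameter chords to be short and arranging a short length that occurs exactly once confines any symmetry to a bounded window of the polygon, leaving only a finite check. The remaining bookkeeping — that no re-pairing creates a loop or a chord coinciding with a diameter, and that $n>4$ is precisely the threshold at which the four constructions separate — is routine.
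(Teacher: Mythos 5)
Your overall strategy is the same as the paper's: reduce to loopless chord diagrams via Lemma~\ref{l:cycles} and exhibit four diagrams whose automorphism groups in $D_{2n}$ are the full dihedral group, a Klein four-group, a single vertex-reflection, and the trivial group. The paper does this by displaying the four diagrams for $n=6$ (Figure~\ref{f:chord-pattern}) and extending them by inserting vertex pairs along symmetric strips; it leaves the orbit-counting principle and the value $\lceil n/2\rceil$ as assertions. Your first two steps --- the proof that the number of distinct \ext-path nets equals the number of $\mathrm{Aut}(C)$-orbits of boundary edges, and the Burnside computation $\tfrac14(2n+0+0+f)=\lceil n/2\rceil$ --- are correct and make explicit what the paper only gestures at; in particular you rightly observe that the order-two case must use a \emph{vertex}-reflection (an edge-reflection would give $n+1$ orbits).

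However, your construction of $C_4$ fails as stated, and this is exactly the step you flagged as the crux. You ask for a diagram with a unique length-two chord, a unique length-three chord, and \emph{all other chords diameters}. No such matching exists for $n>2$: the diameters pair each vertex with its antipode, so the set of vertices \emph{not} covered by diameters must be closed under the antipodal map $i\mapsto i+n$. That set is the four endpoints $\{a,a+2,b,b+3\}$ of your two short chords, and a short case check (either $a+n\equiv b$ or $a+n\equiv b+3$, each forcing $n\le 2$) shows it can never be antipodally closed. The same obstruction shows your $C_2$ works only because its two length-two chords together cover the antipodally closed set $\{1,n-1,n+1,2n-1\}$, and it constrains your vaguely described $C_3$ as well (there the chord through a fixed vertex of $\sigma$ is forced to be the diameter $\{0,n\}$). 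The defect is repairable --- e.g.\ take non-diameter chords $\{1,2n-1\}$, $\{2,n+1\}$, $\{n-1,n+2\}$ of lengths $2$, $n-1$, $3$ and diameters elsewhere; the unique length-two chord still pins any symmetry to $\{1,\sigma_0\}$ and the unique length-three chord still rules out $\sigma_0$ --- but as written the diagram whose existence your argument needs does not exist, so the proof of the $2n$ case is incomplete.
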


\begin{proof}
Figure~\ref{f:chord-pattern} shows the chord arrangements for these four cases when  $n=6$. Part (a) is symmetric up to the full action of the dihedral group of order $2n$.  The three following parts are symmetric up to (b) reflection along both axes, (c) reflection only along the horizontal axis, and (d) neither rotation nor reflection.   To generalize the last three cases for larger values of $n$, insert additional pairwise vertices along the highlighted strips of the polygon, preserving the appropriate symmetries.  For $n=5$, delete a pair of antipodal nodes in (a) and horizontally collapse the highlighted strips into one edge for (bcd).
\end{proof}

\begin{figure}[h]
\includegraphics[width=.9\textwidth]{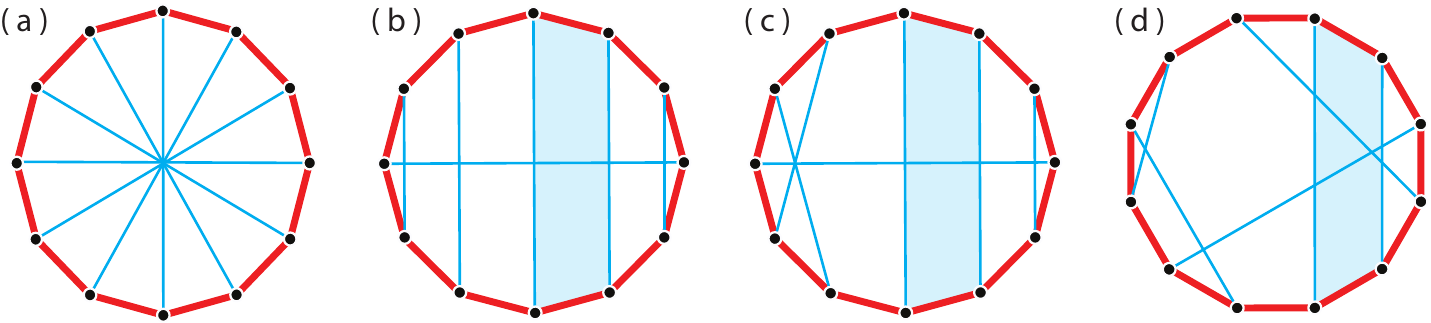}
\caption{Chord diagrams with 1, $\lceil n/2 \rceil$, $n$, and $2n$ unique nets, respectively.}
\label{f:chord-pattern}
\end{figure}

\begin{exmp}
From Proposition~\ref{p:maxnet}, the 5-cube will have four spanning cycles that unfold into exactly 1, 3, 5, and 10 distinct path nets.  Although there is only one spanning cycle that produces a unique path net unfolding, there are 8, 5, and 6 different spanning cycles that yield 3, 5, and 10 unique path nets, respectively. Figure~\ref{f:5-6-chords} shows these 6 spanning cycles, each of which unfold into 10 distinct nets. Moreover, due to chord diagram symmetries, there are no spanning cycles that unfold into 4, 7, 8, or 9 distinct nets, respectively. 
\end{exmp}

\begin{rem}
It would be interesting to explore the \emph{distribution} of these distinct unfoldings over all possible spanning cycles.
\end{rem}

\begin{figure}[h]
\includegraphics[width=\textwidth]{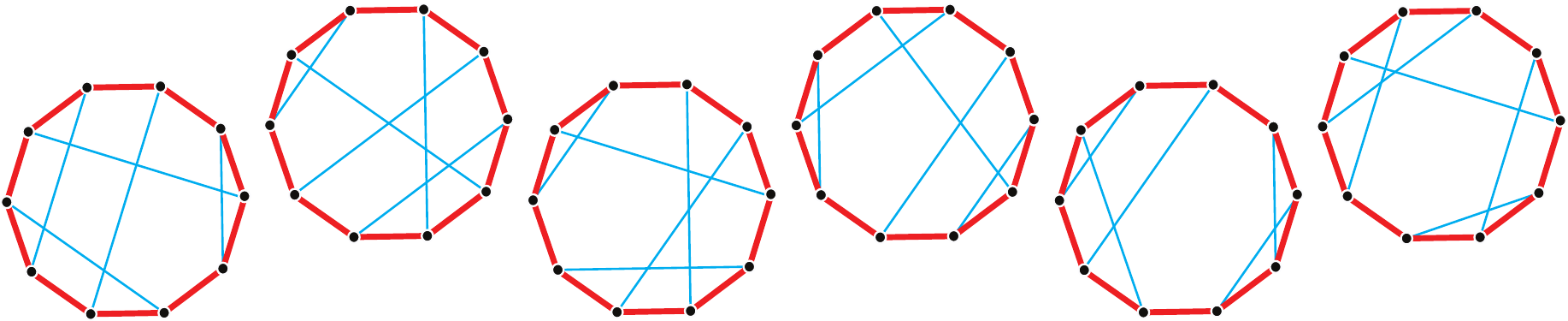}
\caption{Six spanning cycles, each of which unfold to 10 unique nets.}
\label{f:5-6-chords}
\end{figure}

%
%
\section{Conclusion}

The work of Horiyama and Shoji \cite{hosh} show that every edge unfolding of the five Platonic solids results in a net.  The higher-dimensional analogs of the Platonic solids are the regular convex polytopes: three classes of such polytopes exist for all dimensions (simplex, cube, orthoplex) and three additional  ones only appear in 4D (24-cell, 120-cell, 600-cell).  We have considered all unfoldings of cubes, and a similar result for simplices easily follows.  We are encouraged to claim the following:

\begin{conj}
Every ridge unfolding of a regular convex polytope yields a net.
\end{conj}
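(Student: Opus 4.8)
The plan is to push the combinatorial framework of Section~\ref{s:nets} --- spanning trees of the dual's $1$-skeleton, a coordinate system on the edges at a base node, unfolding steps read off as ``rolls,'' a no-U-turn principle, and strict monotonicity along tree-paths --- through every regular convex polytope in turn. Since the regular polygons unfold trivially into $\R^1$ and the Platonic solids are covered by Horiyama and Shoji \cite{hosh}, the content lies in dimension four and higher, where the regular convex polytopes are the three infinite families (the simplex $\alpha_n$, the cube $\gamma_n$, the orthoplex $\beta_n$) together with the exceptional $4$-polytopes: the $24$-cell, the $120$-cell, and the $600$-cell. The cube family is exactly the theorem proved here, so what remains is the simplex family, the orthoplex family, and the three exceptional cases.

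For the simplex and the orthoplex the combinatorial skeleton transfers cleanly. A ridge unfolding of $\alpha_n$ is a spanning tree of $K_{n+1}$, and one of $\beta_n$ is a spanning tree of the hypercube graph $Q_n$ (label the $2^n$ facets of $\beta_n$ by sign vectors, so facet-adjacency is Hamming adjacency and a ``roll'' is a single sign flip). In both cases one fixes the base facet --- a regular $(n-1)$-simplex --- in $\R^{n-1}$, develops each successor by reflecting across the shared ridge-hyperplane, equips the edges at the base node with a coordinate system, and aims for the analog of Lemma~\ref{l:uturn}: along any path of the tree no reflection returns a facet to a previously swept region. The difficulty is that for $n-1 \ge 3$ the regular $(n-1)$-simplex does not tile $\R^{n-1}$, so there is no integer lattice and no ``Euclidean distance strictly increases'' shortcut. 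I would replace it with a direct estimate: fixing a generic ray $\rho$ from the centroid of the base facet, show that each non-backtracking step strictly increases the inner product of $\rho$ with the developed centroid, which reduces to showing that a product of reflections across the facet-hyperplanes of a regular simplex, taken along a non-backtracking walk, accumulates total ``turning'' less than $\pi$. Making this estimate uniform in $n$ is where the real work sits, and I expect the orthoplex --- with its exponentially many facets and a genuine arbitrary-dimension statement --- to be the hardest of the infinite families.

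The three exceptional $4$-polytopes split by size. The $24$-cell has only $24$ octahedral facets and the symmetry group $F_4$, so one plausible route is to enumerate its ridge unfoldings in the manner of Buekenhout and Parker \cite{bupa} and verify non-overlap by computer, or to use the symmetry to reduce to a short list of representative developments in $\R^3$. The $120$-cell and $600$-cell are the genuinely hard cases: their dual graphs have $120$ and $600$ nodes and an astronomical number of spanning trees, so enumeration is out of reach, and since dodecahedra and tetrahedra do not tile $\R^3$ there is again no lattice to fall back on. Here one needs a structural non-overlap principle --- roughly, ``everything developed through a given ridge of the base lies strictly on the far side of that ridge's hyperplane,'' applied recursively down the tree, together with the same turning estimate as above --- and establishing it for dodecahedral and tetrahedral developments in $\R^3$ is, in my view, the main obstacle to the conjecture.

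In short: the cube is done, the simplex should follow by an analog of Lemma~\ref{l:uturn}, and the remaining cases all come down to one missing ingredient --- a monotonicity estimate for developed facets that survives the composition of reflections when those facets do not tile the target hyperplane. Producing that estimate, first for $\beta_n$ in general dimension and then for the $120$-cell and $600$-cell, is the step I expect to be hard.
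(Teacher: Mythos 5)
This statement is stated in the paper as an open conjecture --- the authors prove it only for the cube family (their main theorem), note that the simplex case ``easily follows,'' and cite Horiyama--Shoji for the Platonic solids; they offer no argument for the orthoplex family or the three exceptional $4$-polytopes. Your submission is therefore being measured against no existing proof, and it does not supply one: it is a research plan that correctly scopes the problem but, by your own account, leaves the decisive step unproven in every case that is not already settled.

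The genuine gap is the one you name yourself. The paper's entire non-overlap argument rests on the fact that the $(n-1)$-cube facets tile $\R^{n-1}$, so that the development lands on the integer lattice $\Z^{n-1}$, Lemma~\ref{l:uturn} forbids antipodal directions along a tree-path, and Euclidean distance between developed facets then grows monotonically with combinatorial distance. For the orthoplex, the $120$-cell, and the $600$-cell the facets (regular simplices, dodecahedra, tetrahedra) do not tile the target hyperplane, so there is no lattice, no coordinate system in the paper's sense, and no analog of Lemma~\ref{l:uturn} available off the shelf. Your proposed replacement --- a ``total turning less than $\pi$'' estimate for products of reflections along a non-backtracking walk --- is plausible as a target but is asserted, not established; it is not even clear that non-backtracking in the dual graph implies any angular bound, since two consecutive ridges of a simplex or dodecahedron meet at a fixed dihedral angle and long non-backtracking walks can certainly accumulate turning exceeding $\pi$ (this is exactly why overlap is a live possibility and why the statement remains a conjecture). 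Your treatment of the $24$-cell defers to a computation that is not carried out, and your simplex argument, while likely recoverable (there are only $n+1$ facets and the dual tree is a spanning tree of $K_{n+1}$), is also left at the level of a sketch. In short: nothing here goes beyond what the paper already knows, and the missing monotonicity estimate you identify is precisely the open content of the conjecture.
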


%
%
\bibliographystyle{amsplain}

\end{document}